\setlist{noitemsep}
\def\equationautorefname~#1\null{(#1)\null}
\newtheorem{prop}{Proposition}[section]
\newtheorem{thm}[prop]{Theorem}
\newtheorem{lemma}[prop]{Lemma}
\newtheorem{conj}[prop]{Conjecture}
\theoremstyle{definition}
\newcommand{\D}{\mathcal{D}_R}
\newcommand{\Dis}[1]{\mathcal{D}_{#1}}
\newcommand{\N}{\mathbb{N}}
\newcommand{\R}{\mathbb{R}}
\newcommand{\An}[1]{\mathcal{A}_{#1}}
\newcommand{\E}{\mathbb{E}}
\newcommand{\pr}{\mathbb{P}}
\newcommand{\PP}{\mathcal{P}_{n,\alpha, \nu}}
\newcommand{\im}{\mu_{n,\alpha, \nu}}
\newcommand{\nn}{\nonumber}
\newcommand{\eqd}{\stackrel{\Delta}{=}}
\newcommand{\Var}{\operatorname{Var}}
\newcommand{\Cov}{\operatorname{Cov}}
\newcommand{\Po}{\operatorname{Po}}
\newcommand{\Haa}{\ensuremath{{\mathbb H}}}
\newcommand{\Pee}[0]{\ensuremath{{\mathbb P}}}
\newcommand{\Fcal}[0]{\ensuremath{{\mathcal F}}}
\newcommand{\GPo}[0]{\ensuremath{G_{\text{Po}}}}
\definecolor{orange}{RGB}{255,127,0}
\definecolor{pink}{RGB}{255,150,150}
\begin{document}
\title{Hamilton cycles and perfect matchings in the KPKVB model}
\author{Nikolaos Fountoulakis\thanks{School of Mathematics, University of Birmingham, United Kingdom. E-mail: {\small\tt n.fountoulakis@bham.ac.uk}).
Research partially supported by the Alan Turing Institute, grant no. EP/N510129/1.} \and 
Dieter Mitsche\thanks{Institut Camille Jordan, 
Universit\'e Jean Monnet, France. E-mail: {\small\tt dmitsche@unice.fr}.} \and
Tobias M\"uller\thanks{Benoulli Institute, Groningen University, The Netherlands. E-mail: {\small\tt tobias.muller@rug.nl}. Research partially 
supported by NWO grants 639.032.529 and 612.001.409.} \and
Markus Schepers\thanks{Benoulli Institute, Groningen University, The Netherlands. E-mail: {\small\tt m.f.schepers@rug.nl}. Research partially 
supported by NWO grant 639.032.529}}

\maketitle

\begin{abstract}
In this paper we consider the existence of Hamilton cycles and perfect matchings in a random graph model proposed by Krioukov et al.~in 2010. 
In this model, nodes are chosen randomly inside a disk in the hyperbolic plane and two nodes are connected if they are at most a certain 
hyperbolic distance from each other. It has been previously shown that this model has various properties associated with complex networks, including 
a power-law degree distribution, ``short distances'' and a non-vanishing clustering coefficient. 
The model is specified using three parameters: the number of nodes $n$, which we think of as going to infinity, and $\alpha, \nu > 0$, which we think of 
as constant. Roughly speaking $\alpha$ controls the power law exponent of the degree sequence and $\nu$ the average degree.

Here we show that for every $\alpha < 1/2$ and $\nu=\nu(\alpha)$ sufficiently small, the model does not contain a perfect matching 
with high probability, whereas for every $\alpha < 1/2$ and $\nu=\nu(\alpha)$ sufficiently large, the model contains a 
Hamilton cycle with high probability.
\end{abstract}

\section{Introduction} 

A \emph{Hamilton cycle} in a graph is a cycle which contains all vertices of the graph. 
A graph is called \emph{Hamiltonian} if it contains at least one Hamilton cycle. 
A {\em matching} is a set of edges that do not share endpoints and a {\em perfect matching} is a matching that covers all vertices of the graph.

Hamilton cycles and perfect matchings are classical topics in graph theory. 
Historically the existence of Hamilton cycles and perfect matchings in a random graph has been a central theme in the theory of 
random graphs as well. 
In particular, in the theory of the Erd\H{o}s-R\'enyi model the threshold for having a Hamilton cycle as well as the 
simultaneous emergence in the random graph process of a Hamilton cycle together with having minimum degree at least two are among 
the classic results in the field~\cite{ar:Bol84, ar:AjKomSz85, ar:KomSzem83, ar:Posa76, ar:Korshunov77}. 
In the context of random geometric graphs in the Euclidean plane, analogous results have been obtained~\cite{DiazMitschePerez,ar:BBKMW, MPW11}. 
The emergence of Hamilton cycles was also considered in other models, including the preferential attachment model~\cite{ar:AlanXavier} and 
the random $d$-regular graph model~\cite{ar:Nick}.

In this paper, we will consider the problem of the existence of a Hamilton cycle and a perfect matching in a 
model of random graphs that involves points taken randomly in the hyperbolic plane. 
This model was introduced by Krioukov, Papadopoulos, Kitsak, Vahdat and Bogu\~{n}\'a~\cite{ar:Krioukov} in 
2010 - we abbreviate it as \emph{the KPKVB model}. We should however note that the model also goes by several other names in the literature, including
{\em hyperbolic random geometric graphs} and {\em random hyperbolic graphs}.
The model was intended to model complex networks 
and, in particular, it is motivated by the assumption that the properties of complex networks are the expression of a hidden geometry which 
expresses the hierarchies among classes of nodes of the network. 
Krioukov et al.~postulate that this geometry is hyperbolic space.

\subsection*{The KPKVB model}

Given $\nu  \in (0, \infty)$ a fixed constant and a natural number $n > \nu$, we let $R  = 2 \log (n/\nu)$, 
or equivalently  $n = \nu \exp(R /2)$. Also, let $\alpha \in (0, \infty)$.

The hyperbolic plane $\Haa$ is a surface with constant Gaussian curvature $-1$. 
It has several convenient representations (i.e.~coordinate maps), including the Poincar\'e halfplane model, the Poincar\'e
disk model and the Klein disk model. A gentle introduction to hyperbolic geometry and these 
representations of the hyperbolic plane can for instance be found in~\cite{stillwellboek}.
Throughout this paper we will be working with a representation of the hyperbolic plane using {\em hyperbolic polar 
coordinates}. That is, a point $p \in \Haa$ is represented as $(r,\theta)$, where $r$ is 
the hyperbolic distance between $p$ and the origin $O$ and $\theta$ as the angle between the line segment $Op$ and the positive $x$-axis 
(Here, when mentioning ``the origin'' and the angle between the line segment and the positive $x$-axis, we think of $\Haa$ embedded as the 
the Poincar\'e disk in the ordinary euclidean plane.)
We shall denote by $\Dis{R}$ the hyperbolic disk of radius $R$ around the origin $O$, and by $d_{\Haa}(u,v)$ we denote the hyperbolic distance 
between two points $u,v \in \Haa$. 

The vertex set of the KPKVB random graph $G(n;\alpha,\nu)$ consists of~$n$ 
i.i.d.~points in $\Dis{R}$ with probability density function
\begin{equation} \label{eq:pdf}
f_{\alpha,R}(r,\theta) = \frac{\alpha \sinh \alpha r}{2\pi(\cosh \alpha R -1)},
\end{equation}
for $0\leq r <R$ and $0<\theta\leq 2\pi$.

When $\alpha =1$ the distribution  of $(r, \theta)$ given by~\eqref{eq:pdf} is the uniform distribution on $\D$.
For general $\alpha \in (0, \infty)$ Krioukov et al.~\cite{ar:Krioukov} call this the \emph{quasi-uniform} distribution on $\D$. 
In fact, for general $\alpha$ it can be viewed as the uniform distribution on a disk of radius $R$ on the hyperbolic plane with curvature 
$-\alpha^2$.

The KPKVB random graph $G(n;\alpha,\nu)$ is the random graph whose vertex set is a set $V_n$ of $n$ points of chosen i.i.d.~according to the
$(\alpha,R)$-quasi uniform distribution, where any two of them are joined by an edge if they are within hyperbolic distance at most $R$.

Krioukov et al.~\cite{ar:Krioukov} observed that the distribution of the degrees in
$G(n;\alpha,\nu)$ follows a power law with exponent $2\alpha  + 1$, for $\alpha \in (1/2, \infty)$.  
This was verified rigorously by Gugelmann et al.~in~\cite{ar:Gugel}.  
Note that for $ \alpha \in (1/2, 1)$, this quantity is between 2 and 3, which is in line
with numerous observations in networks which arise in applications (see for example~\cite{BarAlb}).
In addition, Krioukov at al.~observed, and Gugelmann et al.~proved rigorously, that 
the (local) clustering coefficient of the graph stays bounded away from zero a.a.s.
Here and in the rest of the paper we use the following notation: If $(E_n)_{n\in \N}$ is a sequence of events then we say 
that $E_n$ occurs \emph{asymptotically almost surely (a.a.s.)}, if $\Pee(E_n) \to 1$ as $n\to \infty$. 

Krioukov et al.~\cite{ar:Krioukov} observed also that the average degree of $G(n;\alpha,\nu)$ is determined via the parameter 
$\nu$ for $\alpha \in (1/2, \infty)$. 
This was rigorously verified in~\cite{ar:Gugel} too. In particular, they proved that
the average degree tends to  $2\alpha^2 \nu / \pi(\alpha-\frac12)^2$ in probability.

%

In~\cite{BFMgiantEJC}, it was established
that $\alpha = 1$ is the critical point for the emergence of a giant component in $G(n;\alpha,\nu)$. In particular, when $\alpha \in (0,1)$, the 
fraction of the vertices contained in the largest component is bounded away from 0 a.a.s., whereas if $\alpha \in (1, \infty)$, the largest
component is sublinear in $n$ a.a.s.
For $\alpha = 1$, the component structure depends on $\nu$. If $\nu$ is large enough,
then a giant component exists a.a.s., but if $\nu$ is small enough, then a.a.s. all components are sublinear~\cite{BFMgiantEJC}. 

 In~\cite{ar:FounMull2017} this picture is sharpened. There, the first and the third author showed that the fraction of vertices 
 belonging to the largest component converges in probability to a constant which
depends on $\alpha$ and $\nu$.  For $\alpha =1$, the existence of a critical value $\nu_0 \in (0, \infty)$ is established such that 
when $\nu$ crosses $\nu_0$ a giant component emerges a.a.s.~\cite{ar:FounMull2017}. 
In~\cite{KiwiMit} and ~\cite{KiwiMit2017+}, the second author together with Kiwi considered the size of the second largest component 
and showed that when $\alpha \in (1/2, 1)$, a.a.s., the second largest component has polylogarithmic order with exponent 
$1/(\alpha -1/2)$.

Apart from the degree sequence, clustering and component sizes, the graph distances in this model have also been considered.
In~\cite{KiwiMit} and~\cite{ar:FriedKrohmerDiam} a.a.s.~polylogarithmic upper and lower bounds on the diameter of the largest component 
are shown, and in~\cite{ar:MullerDiam}, these were sharpened to show that $\log n$ is the correct order of the diameter.
Furthermore, in~\cite{ar:AbdBodeFound} it is shown that for $\alpha \in (1/2,1)$ the largest component is what is called an 
\emph{ultra-small world}: it exhibits doubly logarithmic typical distances. 

Results on the global clustering coefficient were obtained in~\cite{CF16}, and on the evolution of graphs on more general
spaces with negative curvature in~\cite{F12}. 
The spectral gap of the Laplacian of this model was studied in~\cite{KM18}.

The first and third author together with Bode~\cite{BFMconnRSA}, showed that $\alpha=1/2$ is the critical 
value for connectivity: that is, when $\alpha \in (0, 1/2)$, then $G(n;\alpha,\nu)$ is a.a.s.~connected, whereas $G(n;\alpha,\nu)$ is 
a.a.s.~disconnected when $\alpha \in (1/2, \infty)$. 
The second half of this statement is in fact already immediate from the results of Gugelmann et al.~\cite{Gugel} : there it is shown that 
for $\alpha>1/2$, a.a.s., there are linearly many isolated vertices.
For $\alpha =1/2$, the probability of connectivity tends to a limiting value that is function of $\nu$ that is continous and non-decreasing and 
that equals one if and only if $\nu \geq \pi$.

\subsection*{Our results}

In the present paper, we explore the existence of Hamilton cycles and perfect matchings in the 
$G(n;\alpha,\nu)$ model. In the light of the result on isolated vertices mentioned above, 
the question is non-trivial only for $\alpha \leq 1/2$.
A perfect matching trivially cannot exist when $n$ is odd. For this reason we find it convenient
to switch to considering {\em near perfect matchings} from now on. That is, matchings that cover all but at most one vertex.
(So if $n$ is even a near perfect matching is the same as a perfect matching; and the existence of a Hamilton cycle
implies the existence of a near perfect matching.) 

Our main results shows that in the regime $\alpha < 1/2$ regime, a.a.s., the existence of a Hamilton cycle as well as of a 
(near) perfect matching has a non-trivial phase transition in $\nu$: 

\begin{thm}\label{thm:main}
For all positive real $\alpha < \frac{1}{2}$, there are constants $\nu_0=\nu_0(\alpha)$ and $\nu_1 = \nu_1 (\alpha)$
such that the following hold.
For all $0<\nu < \nu_0$, the random graph $G(n;\alpha,\nu)$ a.a.s. does not have a near perfect matching (and consequently no Hamilton cycle either). 
For all $\nu > \nu_1$, $G(n;\alpha,\nu)$ a.a.s. has a Hamilton cycle. 
\end{thm}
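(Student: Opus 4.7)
Since by~\cite{BFMconnRSA} the graph $G(n;\alpha,\nu)$ is a.a.s.\ connected when $\alpha<\tfrac12$, isolated vertices or isolated odd components are not available as obstructions. Instead, my plan is to exhibit a local substructure that blocks matchings inside a connected graph. The simplest candidate is a \emph{$3$-pendant star}: a vertex $v$ with three distinct neighbours $u_1,u_2,u_3$ such that each $u_i$ has $v$ as its unique neighbour. In any matching at most one of $u_1,u_2,u_3$ can be covered (via the edge to $v$), so the existence of such a configuration precludes a near perfect matching by Tutte--Berge (take $S=\{v\}$ and note $G-S$ has at least three odd components). To show such a star a.a.s.\ exists once $\nu<\nu_0(\alpha)$, I would run a first- and second-moment argument on the count $X_n$. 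Writing $\mathbb{E}[X_n]$ as $n(n-1)(n-2)(n-3)$ times a four-fold integral against $f_{\alpha,R}^{\otimes 4}$, the integrand contains the factor $(1-p)^{n-4}$ that no other vertex lies in the union of hyperbolic balls of radius $R$ around the three $u_i$. For $\alpha<1/2$ and $v$ in a suitable radial window, each such ball has $f_{\alpha,R}$-mass of order $\nu/n$, so this factor is bounded away from zero; a change of variables then shows $\mathbb{E}[X_n]\to\infty$ once $\nu$ is sufficiently small. A standard overlap case split yields $\Var(X_n)=o(\mathbb{E}[X_n]^2)$, and Chebyshev concludes.

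\textbf{Hamilton cycle for $\nu$ large.} The plan is a constructive sector-based argument. Partition $\mathcal{D}_R$ into $M=M(\alpha,\nu)$ angular sectors of equal width, and call a vertex a \emph{core vertex} if its radial coordinate satisfies $r\le R/2$. Two facts drive the construction: first, for $\alpha<\tfrac12$ the $f_{\alpha,R}$-mass of $\{r\le R/2\}$ is bounded below by a positive constant, so each sector contains a linear number of core vertices; second, any two core vertices lying in the same or in adjacent sectors are within hyperbolic distance $R$ and are therefore adjacent, so sector cores form cliques and adjacent cores are completely joined. The proof then proceeds in three stages: (i) show that for $\nu$ large every \emph{peripheral} (non-core) vertex has a neighbour in the core of its own sector, by a first-moment union bound ruling out bad peripheral vertices; (ii) inside each sector, weave the peripheral vertices into a Hamilton path on the vertices of the sector using the sector core clique as a scaffold; (iii) splice the sector paths together using core-to-core edges between consecutive sectors and close up the cycle. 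A small reservoir of unused core vertices absorbs any technical leftovers via a P\'osa-style rotation argument.

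\textbf{The main obstacle.} The harder of the two directions is the Hamilton cycle construction, and within it the delicate step is (i): showing that \emph{every single} peripheral vertex -- including those very close to $r=R$, whose distance-$R$ ball subtends only a tiny angle -- has at least one core neighbour. Fixing the threshold $\nu_1(\alpha)$ amounts to choosing $\nu$ large enough that the expected number of peripheral vertices without a core neighbour in their sector is $o(1)$; keeping this bound consistent with the sector count $M$, the core-radius threshold $R/2$, and the splicing-stage reservoir size is where most of the quantitative work lies. The matching direction is comparatively routine, its only subtlety being the uniform control of the $(1-p)^{n-4}$ factor, which one handles by restricting the apex $v$ of the star to a suitable compact radial window and absorbing the resulting constants into the definition of $\nu_0(\alpha)$.
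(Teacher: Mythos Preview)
Your plan breaks down in both halves, and in each case the failure traces back to the same computation: for $\alpha<\tfrac12$ the $f_{\alpha,R}$-mass of the ball $B(u)$ of a point $u$ near the boundary is \emph{not} of order $\nu/n$. A direct calculation (integrate $\theta_R(r,r')\,\alpha e^{-\alpha(R-r')}$ over $r'$, using Lemma~\ref{lem:adjang}) gives that a vertex at radius $r$ has expected degree $\Theta\bigl(\nu e^{(1/2-\alpha)(R-r)}\,e^{(1/2-\alpha)r}\bigr)=\Theta\bigl(\nu e^{(1/2-\alpha)R}e^{-\alpha(r-R)}\bigr)$; in particular, even at $r=R$ the expected degree is $\Theta(\nu e^{(1/2-\alpha)R})\to\infty$. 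So the minimum degree diverges, and a.a.s.\ there are no degree-one vertices at all. Your $3$-pendant star count $X_n$ therefore satisfies $\Ee X_n\le n^4\exp(-\Omega(\nu e^{(1/2-\alpha)R}))\to 0$; the first-moment method shows $X_n=0$ a.a.s., the opposite of what you need. The paper's obstruction is different and is forced by this phenomenon: with $s=1/\alpha$ it counts vertices in the outer annulus $\{r>R-s\}$ that have no neighbour \emph{in that same annulus} (they have plenty of neighbours further in), and shows that for small $\nu$ there are more such vertices than there are vertices in $\{r\le R-s\}$. Since each of the former must be matched to one of the latter, no near-perfect matching exists.

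On the Hamiltonicity side, your identification of the bottleneck is inverted. Step~(i) is easy: every peripheral vertex has $\Theta(\nu e^{(1/2-\alpha)R})$ core neighbours, so a union bound over $n$ vertices kills bad events with room to spare. The genuine difficulty is step~(ii). The number of core vertices is $\Theta(n e^{-\alpha R/2})$ while the number of peripheral vertices is $\Theta(n)$, so the core is smaller by a factor $e^{\alpha R/2}\to\infty$; you cannot alternate core--peripheral--core. And peripheral vertices at radii near $R$ are mutually adjacent only when their angular distance is $O(e^{-R/2})$, so within a single sector of constant angular width there are $\Theta(e^{R/2})$ angularly separated ``clumps'' of peripheral vertices with no direct edges between clumps. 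Weaving all of these into one path is exactly the combinatorial problem the paper solves, and it does so not with a constant number of sectors but with a hierarchical tiling: layer~$0$ at the boundary has $\Theta(e^{R/2})$ tiles of angular width $\Theta(e^{-R/2})$, and each successive layer halves the tile count while doubling the width; cycles are merged layer by layer via a recursively controlled ``demand'' variable (Lemmas~\ref{lem:cyclemerging}--\ref{lem:exptail}). Your fixed-$M$ sector scheme collapses precisely this hierarchy and leaves the hard merging problem untouched.
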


To our knowledge, this is the first time this problem is considered for the $G(n;\alpha, \nu)$ model. 
We conjecture that the dependence on $\nu$ is sharp. 

\begin{conj}
For every $0<\alpha<1/2$ there exists a critical $\nu_c = \nu_c(\alpha) > 0$ such that
when $\nu < \nu_c$ a.a.s. $G(n;\alpha,\nu)$ has no near perfect matching, whereas if $\nu > \nu_c$ then
a.a.s. $G(n;\alpha,\nu)$ has a Hamilton cycle.
\end{conj}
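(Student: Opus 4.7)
The proof splits into two independent parts.

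For the first part (non-existence of a near perfect matching when $\nu$ is small), I would use a Tutte-type obstruction. Since Bode, Fountoulakis and M\"uller~\cite{BFMconnRSA} have shown $G(n;\alpha,\nu)$ is a.a.s.\ connected for $\alpha<1/2$, disconnection-based obstructions are unavailable. Instead, the plan is to locate a vertex $v$ with at least three \emph{pendant} neighbours, i.e.\ neighbours of degree exactly $1$ whose sole neighbour is $v$. Setting $S=\{v\}$ in Tutte's criterion then yields at least three odd components of $G-S$ (the isolated pendants), exceeding $|S|+1=2$ and ruling out a near perfect matching. The computation is a first-moment calculation: integrate, over the position of $v$ and the positions of three putative pendants $u_1,u_2,u_3$, the probability that all three lie in the hyperbolic ball of radius $R$ around $v$ and that none of the remaining $n-4$ vertices lie in the balls around $u_1,u_2,u_3$. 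Using the volume/density estimates of~\cite{ar:Gugel,BFMconnRSA}, the resulting expectation grows without bound as $\nu\to 0$; a second-moment (or Chen--Stein / Poissonisation) argument then upgrades this to existence a.a.s.

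For the second part (existence of a Hamilton cycle when $\nu$ is large), I would follow the P\'osa rotation-extension framework. Concretely:
\begin{enumerate}
\item \emph{Minimum degree.} Show that for $\nu$ large enough, a.a.s.\ $\delta(G)\geq 3$, by bounding the expected number of vertices of degree at most $2$ using the standard density estimates.
\item \emph{Expansion.} Show that a.a.s.\ every set $S\subseteq V$ with $|S|\leq n/3$ satisfies $|N(S)\setminus S|\geq 2|S|$. I would split into two regimes: a ``small $S$'' regime, where a Chernoff bound on the number of points landing in the geometric $R$-neighbourhood of $S$ suffices; and a ``large $S$'' regime, where the presence of a few hub vertices near the origin (which are connected to almost the entire graph) forces large neighbourhoods automatically.
\item \emph{Rotation-extension.} Combine Steps 1 and 2 via P\'osa's lemma to produce a Hamilton cycle a.a.s.
\end{enumerate}

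The main obstacle is Step 2. The quasi-uniform density concentrates its mass near $r=R$ while the connection threshold forces angular windows to shrink rapidly with radial coordinate, so neighbourhoods of sets concentrated in a narrow annular band near the boundary are unions of many thin angular sectors whose total measure must be controlled uniformly over all such $S$. Making the expansion estimate robust in this boundary regime, and correctly balancing it against the contribution of hub vertices, is where I expect the bulk of the technical work to lie.
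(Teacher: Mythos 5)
The statement you are proving is the paper's \emph{Conjecture} 1.2, which the paper does not prove: its content is precisely the \emph{sharpness} of the threshold, i.e.\ the existence of a single critical value $\nu_c$ with no gap between the ``no near perfect matching'' and ``Hamiltonian'' regimes. Your proposal only addresses the two extreme regimes ($\nu$ sufficiently small, $\nu$ sufficiently large), which --- even if both arguments worked --- would yield the paper's Theorem 1.1 with two constants $\nu_0<\nu_1$, not the conjecture. Nothing in your plan forces the two constants to coincide, and note that monotonicity in $\nu$ is not free here: increasing $\nu$ decreases $R=2\log(n/\nu)$ and changes the vertex distribution, so $G(n;\alpha,\nu)$ is not obtained from $G(n;\alpha,\nu')$ by adding edges. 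This is the essential missing idea, and it is exactly what is open.

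Beyond that, both halves of your plan have concrete problems. For the lower bound, the pendant-vertex obstruction cannot occur: for $\alpha<1/2$ and any fixed $\nu>0$, a vertex at radial coordinate $r$ has expected degree $\Theta(ne^{-\alpha r})$, which is at least $\Theta(\nu^{2\alpha}n^{1-2\alpha})\to\infty$ even at $r=R$. Hence a.a.s.\ the minimum degree tends to infinity and there are no degree-one vertices at all; your first-moment count of a hub with three pendants tends to $0$, not to infinity, for every fixed $\nu$. The obstruction the paper uses is global rather than local: it compares the number $N_s$ of vertices in the outer annulus of width $s=1/\alpha$ having no neighbour in that annulus (each of which must be matched into the inner disk) with the total number $M_s$ of vertices in the inner disk, and shows $N_s>M_s$ a.a.s.\ for $\nu$ small --- a Hall/K\"onig-type deficiency, not a Tutte set of size one.

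For the upper bound, the expansion claim in Step 2 is false, which is why geometric models are generally not handled by P\'osa rotation--extension. Take $S$ to be all vertices with angular coordinate in a fixed sector $[0,\phi]$; then $|S|=\Theta(n)$, but a vertex outside the sector is adjacent to $S$ only if it lies within angular distance $\theta_R(r,r')\approx 2e^{(R-r-r')/2}$ of the sector boundary or is a ``hub'' with $r\le R/2$; both contributions are $o(n)$ (the hubs number only $n^{1-\alpha+o(1)}$), so $|N(S)\setminus S|=o(|S|)$ and the required factor-two expansion fails badly for linear-sized sets. The paper instead uses the standard remedy for geometric graphs: a tessellation of the disk into tiles $T_{i,j}$, a local cycle-merging lemma exploiting that each tile together with the tiles below it forms a clique-like structure, and a supermartingale-style analysis of ``demand'' variables $D_{i,j}$ with sub-exponential tails showing the merging terminates in a single cycle a.a.s. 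If you want to pursue Hamiltonicity here, that tessellation route (as in D\'iaz--Mitsche--P\'erez and Balogh et al.\ for Euclidean random geometric graphs) is the viable one.
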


A natural question to ask is what happens in the case $\alpha=1/2$.
Does there exist $\nu$ large enough so that the graph a.a.s. becomes Hamiltonian in this case as well?

It would also be interesting to explore the relation of Hamiltonicity with the property of 2-connectivity.
If the above conjecture is true, is there a similar behaviour for the property of 2-connectivity? 
If yes, are the corresponding critical constants $\nu_c$ equal? 

\vspace{1ex}

\textbf{Outline of proof.} 
The proof of Theorem~\ref{thm:main} has two parts: in a nutshell, in the first part we show that for $\nu$ small enough, the number of 
vertices close to the boundary of the disk of radius $R$ having no neighbor close to the boundary of the disk will be bigger than the 
total number of vertices relatively close to the centre of the disk. 
Hence, the former vertices would have to be all matched to distinct vertices close to the centre of the disk, but that cannot happen. 
For the second part, we show that for $\nu$ large enough, we can tessellate the disk in such a way, so that iteratively, from the boundary 
towards the center of the disk, we can maintain a set of vertex-disjoint cycles and isolated points, which will eventually be merged 
close to the centre. The fact that $\nu$ is large enough makes the density of vertices in each cell of the tessellation high enough 
so that this procedure terminates successfully.

 \section{Preliminaries}

\subsection{Probabilistic preliminaries} \label{sec:Pois}
To prove Theorem~\ref{thm:main}, we will perform our analysis in the \emph{poissonisation} of the
$G(n;\alpha, \nu)$ model. There, the vertex set is the point set of a Poisson point process on $\D$ with $n$ points
on average. Although the independence that accompanies the Poisson point process facilitates our proofs, when doing standard de-poissonisation, we need to show a slightly stronger version of Theorem~\ref{thm:main}. We give details here.

We denote the Poissonized version of the KPKVB model by $\GPo(n;\alpha,\nu)$. 
The vertex set of this random graph is the set of points of the Poisson point 
process $\PP$ on $\D$ with intensity $n \cdot  \frac{1}{2\pi} f_{\alpha,R}$. 
The set of edges of $\GPo (n;\alpha,\nu)$ consists of those pairs of points of $\PP$ which are at hyperbolic distance at most $R$.
Alternatively, the Poissonized KPKVB model $\GPo(n;\alpha,\nu)$ can be constructed as follows. 
Consider an infinite supply of i.i.d.~points $p_1, p_2,\dots$, chosen according to the
$(\alpha,R)$-quasi uniform distribution, and a Poisson random variable $Z \eqd \Po(n)$. The vertex set 
of $\GPo$ is now the set of points $p_1,\dots, p_Z$ and again we add edges between pairs at hyperbolic distance at most $R$.

The function $n \cdot  \frac{1}{2\pi} f_{\alpha,R}$ is the {\em intensity measure} associated with  $\PP$.
This means in particular that for any Borel subset  $A \subseteq \D$ the expected number of 
points that fall in $A$ equals 

$$ \im (A) = n  \cdot \frac{1}{2\pi} \int_A f_{\alpha,R} (r, \theta) dr d\theta. $$

\noindent
We set $\PP (A) := \PP \cap A$; hence $|\PP(A)| \eqd \Po (\im (A))$. 
%
%
An elementary, but key, observation is that conditional on $|\PP|=n$, the process $\PP$ is distributed as $V_n$. 
In other words, the probability space of the process $V_n$ can be realised as the 
space of $\PP$ conditional on $|\PP|=n$.

The following observation is well known. We include its proof here for completeness. 

\begin{lemma}\label{lem:Poisson}
Let $\Fcal$ be a graph property (formally a family of graphs closed under isomorphism). 
We then have that 
\begin{equation*}
\Pee ( \GPo(n;\alpha,\nu) \in \Fcal \mid |\PP|=n) =1-o(1), \ \mbox{if} \ \Pee ( \GPo(n;\alpha,\nu) \in \Fcal ) = 1 -o(n^{-1/2}). 
\end{equation*}
Thus, if $\Pee(\GPo(n;\alpha,\nu) \not \in \Fcal) = o(n^{-1/2})$, then $\Pee(G(n;\alpha,\nu) \not \in \Fcal) 
= o(1)$.
\end{lemma}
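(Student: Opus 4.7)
The plan is to exploit the fact that, by Stirling's formula applied to the Poisson distribution, the probability that a Poisson random variable with mean $n$ takes the value $n$ is of order $n^{-1/2}$, so that conditioning on $|\PP|=n$ only blows up probabilities by a factor of $\Theta(n^{1/2})$.

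More concretely, I would first recall that since $|\PP|$ has a Poisson distribution with mean $n$,
\[ \Pee(|\PP|=n) = e^{-n} \frac{n^n}{n!} \sim \frac{1}{\sqrt{2\pi n}}, \]
using Stirling's approximation $n! \sim \sqrt{2\pi n}\,(n/e)^n$. In particular $\Pee(|\PP|=n) = \Theta(n^{-1/2})$.

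Next, for any event $A$ (and in particular for $A=\{\GPo(n;\alpha,\nu) \notin \Fcal\}$, which is measurable with respect to $\PP$ since $\Fcal$ is closed under isomorphism), the trivial bound $\Pee(A \cap \{|\PP|=n\}) \leq \Pee(A)$ together with the definition of conditional probability gives
\[ \Pee\bigl(\GPo(n;\alpha,\nu) \notin \Fcal \,\big|\, |\PP|=n\bigr) \;\leq\; \frac{\Pee(\GPo(n;\alpha,\nu) \notin \Fcal)}{\Pee(|\PP|=n)} \;=\; \frac{o(n^{-1/2})}{\Theta(n^{-1/2})} \;=\; o(1), \]
which is equivalent to the first claim. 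For the second assertion I would invoke the earlier observation from the paper that conditional on $|\PP|=n$, the point process $\PP$ is distributed as $V_n$, so that $G(n;\alpha,\nu)$ has the same distribution as $\GPo(n;\alpha,\nu)$ given $|\PP|=n$; applying the first part with $A=\{\GPo \notin \Fcal\}$ then yields $\Pee(G(n;\alpha,\nu) \notin \Fcal) = o(1)$.

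There is essentially no obstacle here: the only ``real'' step is the Stirling estimate for $\Pee(|\PP|=n)$, and everything else is a one-line manipulation of conditional probabilities and an appeal to the already noted distributional identity between $\PP \mid \{|\PP|=n\}$ and $V_n$. I would keep the write-up to a few lines.
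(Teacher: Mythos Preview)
Your proposal is correct and follows essentially the same approach as the paper: both use Stirling's formula to show $\Pee(|\PP|=n)=\Theta(n^{-1/2})$ and then bound the conditional probability of the bad event by the ratio $\Pee(E_n^c)/\Pee(|\PP|=n)$. Your write-up is slightly more explicit in spelling out the second assertion via the distributional identity between $\PP$ conditioned on $|\PP|=n$ and $V_n$, but this is exactly what the paper intends as well.
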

\begin{proof}
By Stirling's formula
\[ \pr(|\PP|=n) = \frac{n^n}{n!}e^{-n} = (1+o(1))\frac{n^n}{\sqrt{2\pi}n^{n+\frac{1}{2}}e^{-n}}e^{-n} = (1+o(1))\frac{1}{\sqrt{2\pi n}}\]
So as $n \rightarrow \infty$, writing $E_n := \{ \GPo(n;\alpha,\nu) \in \Fcal \}$:

$$\Pee (E_n^c) \geq \Pee (E_n^c \mid |\PP|=n) \cdot \Theta \left(n^{-1/2}\right). $$

Therefore, if $ \Pee (E_n^c)  =o(n^{-1/2})$, we deduce that $\Pee (E_n^c \mid |\PP|=n) =o(1)$.
\end{proof}

We then apply a standard depoissonisation technique: using Lemma~\ref{lem:Poisson}, we will develop our arguments in the poissonisation of 
the KPKVB model.
More precisely, we will show that $\GPo(n;\alpha, \nu)$ satisfies certain events with sufficiently high probability 
(that is, with probability at least $1-o(n^{-1/2})$, and we then use Lemma~\ref{lem:Poisson} to deduce that $G(n;\alpha,\nu)$ 
also satisfies them a.a.s., that is, with probability $1-o(1)$.

Another useful tool that will allow us to compute expectations of sums over the points of $\PP$ is the \emph{Campbell-Mecke} formula. 
Let $\mathcal{P}$ be a point process on a metric space $\mathcal{S}$ with density $\rho$. 
Let $\mathcal{N} = \mathcal{N} (\mathcal{S})$ be the set of all countable point configurations in $\mathcal{S}$
equipped with the $\sigma$-algebra of the point process (that is, for any open subset 
$A\subseteq \mathcal{S}$ and any non-negative integer $m$ define a basic measurable subset of 
$\mathcal{N}$ which consists of all configurations which have exactly $m$ points in $A$). 
Now, let $h : \mathbb{R}^{k} \times \mathcal{N} \rightarrow \mathbb{R}$ be a measurable function. 
The Palm theory of Poisson point processes on metric spaces~\cite{bk:LastPenrose} yields: 

\begin{equation} \label{eq:Mecke}
\E \left( \sum_{p_1, \ldots, p_k \in {\mathcal P}, \atop \text{distinct}} h(p_1,\ldots, p_k, \mathcal{P})  \right) =
\int_\mathcal{S} \cdots \int_\mathcal{S} \E (h(x_1,\ldots, x_k, \mathcal{P})) d\rho (x_1) \cdots d \rho (x_k),
\end{equation} 

where the sum ranges over all those $k$-tuples of points which contain no repetitions. 
We also use the following form of Chernoff's bound:

\begin{lemma}\label{lem:Chernoff} 
For a Poisson random variable $X$ with expectation $\mu$, and a positive integer $k \leq \mu$, 
\begin{align*}
\pr(X \leq k) \leq e^{-\mu H(\frac{k}{\mu})},
\end{align*}
 where $H(a) = 1-a+a\ln a$ for $a>0$; in particular if $\frac{k}{\mu} \leq \frac{1}{2}$ it holds that $H(\frac{k}{\mu}) \geq \frac{1}{2}(1-\ln 2)>0$. 
 \end{lemma}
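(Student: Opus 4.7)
The plan is to use the standard Chernoff / Cramér exponential Markov argument, which is the canonical way to obtain such a tail bound. For any $t \geq 0$, applying Markov's inequality to the nonnegative random variable $e^{-tX}$ yields
$$\pr(X \leq k) = \pr(e^{-tX} \geq e^{-tk}) \leq e^{tk}\, \E\bigl[e^{-tX}\bigr].$$
For $X \sim \Po(\mu)$, a one-line computation using the Taylor series of $\exp$ gives the moment generating function $\E[e^{-tX}] = e^{\mu(e^{-t}-1)}$, so
$$\pr(X \leq k) \leq \exp\bigl(tk + \mu(e^{-t} - 1)\bigr).$$

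I would then optimize over $t \geq 0$. Differentiating the exponent gives $k - \mu e^{-t}$, which vanishes at $t^* = \ln(\mu/k)$; the hypothesis $k \leq \mu$ (and $k \geq 1$, say, since $k$ is a positive integer) guarantees $t^* \geq 0$, so this is a valid choice. Substituting yields
$$\pr(X \leq k) \leq \exp\bigl(k\ln(\mu/k) + k - \mu\bigr) = \exp\!\Bigl(-\mu\bigl[1 - \tfrac{k}{\mu} + \tfrac{k}{\mu}\ln\tfrac{k}{\mu}\bigr]\Bigr) = e^{-\mu H(k/\mu)},$$
which is the claimed inequality.

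For the second assertion, I would note that $H'(a) = \ln a < 0$ on $(0,1)$, so $H$ is strictly decreasing there. Hence for every $a \in (0, 1/2]$,
$$H(a) \geq H(1/2) = 1 - \tfrac{1}{2} + \tfrac{1}{2}\ln\tfrac{1}{2} = \tfrac{1}{2}(1 - \ln 2) > 0,$$
giving the stated constant lower bound. I do not foresee any real obstacle here; the argument is textbook. The only small care needed is to recognise that we are bounding the \emph{lower} tail and therefore should take $t \geq 0$ (equivalently, apply Markov to $e^{-tX}$ rather than $e^{tX}$), and to check that the boundary case $k \to 0$ is consistent with the convention $0 \ln 0 = 0$ so that $H(0^+) = 1$ matches the trivially exact bound $\pr(X = 0) = e^{-\mu}$.
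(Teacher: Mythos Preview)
Your argument is correct and is exactly the standard Cram\'er--Chernoff derivation one would expect. The paper itself does not give a proof of this lemma but simply cites Penrose's book, so there is nothing to compare against beyond noting that your write-up is precisely the textbook proof such a reference would contain.
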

 
 A proof can for instance be found in~\cite{Penroseboek}.
 
\subsection{Geometric preliminaries}

For two points $p_1=(r_1,\theta_1)$ and $p_2=(r_2,\theta_2)$ 
in $\D$, we let $\theta (p_1,p_2)$ be their \emph{angular distance} which we define as: 
$$\theta (p_1,p_2) = \min \{|\theta_1 - \theta_2|, 2\pi - |\theta_1- \theta_2| \}.$$
Note that $\theta (p_1,p_2) \in [0,\pi]$. 

The hyperbolic law of cosines relates the angular distance $\theta (p_1,p_2)$ between two points $p_1,p_2$ 
with their hyperbolic distance:
\begin{equation} \label{eq:cosines_law}
\cosh (d_{\Haa}(p_1,p_2))  =
 \cosh (r(p_1) ) \cosh (r(p_2))  - \sinh (r(p_1)) \sinh (r(p_2))
\cos ( \theta (p_1,p_2) ).
\end{equation}

Now, for $r_1,r_2 \in [0,R)$, we let $\theta_R (r_1,r_2) \in [0,\pi]$ be such that 
$$\cosh (R)  = \cosh (r_1 ) \cosh (r_2)  - \sinh (r_1) \sinh (r_2)
\cos ( \theta_R (r_1,r_2) ). $$
For two points $p_1, p_2$ with $r(p_1) = r_1$ and $r(p_2)=r_2$, if 
$\theta (p_1,p_2) = \theta_R (r_1,r_2)$, then $d_{\Haa}(p_1,p_2 )  = R$. 
Equation~\eqref{eq:cosines_law} implies that $d_{\Haa}(p_1,p_2) \leq R$ if and only if 
$\theta (p_1,p_2) \leq \theta_R(r(p_1), r(p_2))$. 

The ball $B(p)$ of radius $R$ around $p$ inside $\D$ is thus defined as 
$$ B(p)  := \{ p' \in \D \ : \ \theta(p,p') \leq \theta_R (r(p),r(p')) \}.$$
If $p$ is a vertex either in the vertex set of $G(n;\alpha,\nu)$ or in the vertex 
set of $\GPo (n;\alpha,\nu)$, then it is adjacent precisely to any other vertex that 
belongs to $B(p)$. 

It will be convenient for our analysis to express $\theta_R(r_1,r_2)$ explicitly as a function of $r_1,r_2$. 
We will make use of the following Lemma from~\cite{ar:FounMull2017}, which does this. 
\begin{lemma}[\cite{ar:FounMull2017}, Lemma 28]\label{lem:adjang}
There exists a constant $K>0$ such that for every $\epsilon >0$ and $R$ sufficiently large, the following holds: for every $r_1, r_2 \in [\epsilon R, R]$ with $r_1+r_2>R$, we have 
$$2e^{\frac{1}{2}(R-r_1-r_2)} - K e^{\frac{3}{2}(R-r_1-r_2)} \leq \theta_R (r_1,r_2) \leq 2e^{\frac{1}{2}(R-r_1-r_2)} + Ke^{\frac{3}{2}(R-r_1-r_2)}.$$
Moreover, if $r_1, r_2 < R-K$, then $\theta_R (r_1,r_2) \geq 2e^{\frac{R-r_1-r_2}{2}}$.
\end{lemma}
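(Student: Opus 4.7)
The plan is to derive an explicit formula for $\sin^2(\theta_R(r_1,r_2)/2)$ from the hyperbolic law of cosines, identify $e^{(R-r_1-r_2)/2}$ as the leading behaviour of $\theta_R(r_1,r_2)$, and then track the errors through two Taylor expansions: one to take a square root, and another to invert $\sin$. The main obstacle, as I expect, is the \emph{moreover} assertion rather than the two-sided estimate itself, because the explicit formula makes $\sin(\theta_R/2)$ \emph{strictly less} than $e^{(R-u)/2}$, so the desired lower bound on $\theta_R$ can only arise from the cubic correction to $\arcsin$.

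First, combining $\cosh R = \cosh r_1 \cosh r_2 - \sinh r_1 \sinh r_2 \cos \theta_R$ with $\cos \theta = 1 - 2\sin^2(\theta/2)$ and $\cosh(r_1-r_2) = \cosh r_1\cosh r_2 - \sinh r_1\sinh r_2$ yields
$$
\sin^2\!\left(\tfrac{\theta_R(r_1,r_2)}{2}\right) = \frac{\cosh R - \cosh(r_1-r_2)}{\cosh(r_1+r_2) - \cosh(r_1-r_2)}.
$$
Writing $u := r_1 + r_2$, $v := |r_1 - r_2|$ and expanding each $\cosh$ in exponentials, this rearranges to $\sin^2(\theta_R/2) = e^{R-u}\, g(R)/g(u)$ with $g(x) := 1 + e^{-2x} - e^{v-x} - e^{-v-x}$. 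Under the hypotheses $r_1, r_2 \in [\epsilon R, R]$ and $u > R$, each of $e^{-2R}, e^{-2u}, e^{v-R}, e^{v-u}$ is at most $e^{-\epsilon R}$, so $g(R), g(u) = 1 + O(e^{-\epsilon R})$; more explicitly, $g(u) - g(R) = 2\cosh(v)(e^{-R}-e^{-u}) - (e^{-2R}-e^{-2u}) > 0$, and a direct estimate gives $e^{R-u} - \sin^2(\theta_R/2) \leq 4 e^{-2\min(r_1,r_2)}$. Taking square roots, then applying $\arcsin(x) = x + x^3/6 + O(x^5)$ (legitimate because the conclusion is vacuous unless $e^{(R-u)/2}$ is bounded well away from $1$), and doubling, yields $\theta_R(r_1,r_2) = 2e^{(R-u)/2} + O(e^{3(R-u)/2})$, which is the two-sided estimate after absorbing implicit constants into $K$.

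For the \emph{moreover} assertion, the strict inequality $g(R) < g(u)$ just established means $\sin(\theta_R/2) < e^{(R-u)/2}$, so the naive bound $\theta_R/2 \geq \sin(\theta_R/2)$ is insufficient. Instead, combining $\arcsin(x) \geq x + x^3/6$ (valid for $x \in [0,1]$) with a refined deficit bound of the shape $e^{(R-u)/2} - \sin(\theta_R/2) \leq C_1 \, e^{(R-u)/2 + v - R}$ (obtained by writing $\sin(\theta_R/2) = e^{(R-u)/2}\sqrt{1 - (g(u)-g(R))/g(u)}$ and using $1-\sqrt{1-y}\leq y$ together with the bound $g(u)-g(R) \leq 2 e^{v-R}$), one obtains $\theta_R/2 \geq e^{(R-u)/2}$ as soon as $e^{2R - u - v} \geq C_0$ for some absolute constant $C_0$. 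Since $u + v = 2\max(r_1,r_2)$, this reduces to $\max(r_1,r_2) \leq R - \tfrac{1}{2}\log C_0$, which the hypothesis $r_1, r_2 < R - K$ guarantees once $K \geq \tfrac{1}{2}\log C_0$. The delicate point, which I anticipate being the hardest part of the proof, is that one must control \emph{both} the deficit $e^{(R-u)/2} - \sin(\theta_R/2)$ and the cubic correction $\tfrac{1}{6}\sin^3(\theta_R/2)$ sharply enough for the comparison to hold uniformly across the entire admissible range of $(r_1,r_2)$, since rougher bounds on either quantity fail once $v$ approaches $R$.
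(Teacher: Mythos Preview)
The paper does not prove this lemma; it is quoted (as Lemma~28) from the cited reference~\cite{ar:FounMull2017}, so there is no in-paper argument to compare against.

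Your argument is correct. The half-angle rewriting
\[
\sin^2\!\bigl(\tfrac{\theta_R}{2}\bigr)=\frac{\cosh R-\cosh v}{\cosh u-\cosh v}=e^{R-u}\,\frac{g(R)}{g(u)}
\]
is the right starting point, and the key observation that $u+v=2\max(r_1,r_2)\le 2R$ is precisely what forces the error term $e^{(R-u)/2+v-R}$ coming from $g(R)/g(u)$ to be dominated by $e^{3(R-u)/2}$; this is the only place the two-sided bound could have gone wrong, and you handle it. For the \emph{moreover} clause you correctly identify the subtlety: since $g(R)<g(u)$ one has $\sin(\theta_R/2)<e^{(R-u)/2}$ strictly, so the cubic term in $\arcsin(x)\ge x+x^3/6$ is genuinely needed. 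Your comparison of the deficit $C_1 e^{(R-u)/2+v-R}$ against the gain $\tfrac{1}{6}e^{3(R-u)/2}$ reduces exactly to $e^{2R-u-v}\ge C_0$, i.e.\ $\max(r_1,r_2)\le R-\tfrac12\log C_0$, which is what the hypothesis $r_1,r_2<R-K$ provides. The concern you flag in the last sentence (that rougher bounds fail when $v$ is close to $R$) is real but already addressed by your own computation: the inequality $u+v\le 2R$ holds with no slack precisely when $\max(r_1,r_2)=R$, and the hypothesis $r_1,r_2<R-K$ gives exactly the margin needed.
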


\section{Non-existence of perfect matching for sufficiently small $\nu$}


The following theorem yields the first part of Theorem~\ref{thm:main}.

\begin{thm}
For all positive real $\alpha < \frac{1}{2}$, there is a $\nu_0=\nu_0(\alpha)>0$ such that for all $0<\nu < \nu_0$, the random 
graph $\GPo(n;\alpha,\nu)$ does not have a near perfect matching w.p. $1- o(n^{-1/2})$.
\end{thm}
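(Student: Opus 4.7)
The plan is to apply a Tutte-Berge-style obstruction. Fix a large constant $c = c(\alpha) > 0$ (to be chosen at the end), and let
\[
B := \{ p \in \D : R - c \leq r(p) \leq R \}, \quad C := \D \setminus B, \quad X := \{ p \in \PP \cap B : \PP \cap B \cap B(p) = \{p\}\}.
\]
By construction, every neighbor of a vertex in $X$ lies in $\PP \cap C$, so any near perfect matching forces $|X| \leq |\PP \cap C| + 1$. It thus suffices, by Lemma~\ref{lem:Poisson}, to show $|X| \geq |\PP \cap C| + 2$ with probability $1 - o(n^{-1/2})$.

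For the expectations, Campbell-Mecke combined with the void probability of a Poisson process gives
\[
\E|X| = \int_B \exp(-\im(B(p) \cap B))\, d\im(p).
\]
Using Lemma~\ref{lem:adjang} and the identity $e^{-R/2} = \nu/n$ coming from $R = 2\log(n/\nu)$, a short computation shows $\im(B(p) \cap B) \leq K_\alpha \nu e^c$ for every $p \in B$, with $K_\alpha = 2\alpha/(\pi(1/2-\alpha))$. Direct integration of $f_{\alpha,R}$ gives $\im(B) = (1+o(1))(1-e^{-\alpha c})\,n$ and $\E|\PP\cap C| = (1+o(1))\, e^{-\alpha c}\, n$, so
\[
\E|X| \geq (1+o(1))(1 - e^{-\alpha c})\, e^{-K_\alpha \nu e^c}\, n.
\]
First choose $c = c(\alpha)$ large enough that $e^{\alpha c} > 40$, and then $\nu_0 = \nu_0(\alpha) > 0$ small enough that $e^{-K_\alpha \nu e^c} > 1/2$ for every $\nu < \nu_0$. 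This yields $\E|X| \geq 10\, \E|\PP \cap C|$ for all large $n$.

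The main obstacle is upgrading these mean estimates to the required probability $1 - o(n^{-1/2})$. The random variable $|\PP \cap C|$ is Poisson with mean $\Theta(n)$, so Lemma~\ref{lem:Chernoff} gives $\Pr(|\PP \cap C| > \tfrac{11}{10}\E|\PP\cap C|) \leq e^{-\Omega(n)} = o(n^{-1/2})$. The lower tail of $|X|$ is the delicate point: a routine second-moment calculation yields $\Var|X| = O(n)$ (the dominant contribution coming from interacting pairs, of which there are $\Theta(n^2 \theta^{\max}) = \Theta(n\nu)$), so Chebyshev gives only $O(1/n)$, which is not sharp enough. The plan is to exploit spatial locality: by Lemma~\ref{lem:adjang} the maximum adjacency angle between two $B$-vertices is $\theta^{\max} = O(\nu/n)$. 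Partition $B$ into $m = \Theta(n^{1/4})$ disjoint angular arcs of width $w = n^{-1/4} \gg \theta^{\max}$, and write $|X| = \sum_{i=1}^m X_i$ where $X_i$ counts $X$-vertices in arc $i$. Each $X_i$ depends only on the Poisson points in the arc and its two immediate angular neighbors, and is dominated by the point count of those three arcs, a Poisson variable with mean $\Theta(n^{3/4})$. After truncating per-arc counts at twice their mean (which by Lemma~\ref{lem:Chernoff} and a union bound fails only on an event of probability $m\,e^{-\Omega(n^{3/4})} = o(n^{-1/2})$), one applies a standard McDiarmid/Azuma-type inequality to the arc-exposure martingale, with per-step Lipschitz constant $O(n^{3/4})$ and $\Theta(n^{1/4})$ steps, to obtain $\Pr(|X| < \tfrac{9}{10}\E|X|) \leq \exp(-\Omega(n^{1/4})) = o(n^{-1/2})$. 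On the intersection of the two concentration events we get $|X| \geq \tfrac{9}{10}\E|X| \geq 9\,\E|\PP \cap C| > |\PP \cap C| + 2$, completing the proof.
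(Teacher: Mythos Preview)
Your approach is essentially the paper's: define an outer annulus $B$, count vertices $X$ that are ``isolated within $B$'', and compare $|X|$ to the number of inner-disk vertices. The paper takes the specific width $s=1/\alpha$ and compares $e^{-1}$ against $(1-e^{-1})$, whereas you keep $c$ free and choose it large; both work.

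The one thing to flag is your claim that ``Chebyshev gives only $O(1/n)$, which is not sharp enough.'' It is sharp enough: $O(1/n)=o(n^{-1/2})$. The paper does exactly the second-moment computation you describe, obtains $\Var|X|=O(n)$, applies Chebyshev, and concludes $\Pee(|X|<(1-\epsilon)\E|X|)=O(1/n)=o(n^{-1/2})$. Your arc-exposure martingale argument is therefore unnecessary (though it would also work). A minor aside: the theorem is stated directly for $\GPo$, so the appeal to Lemma~\ref{lem:Poisson} is not needed at this stage.
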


\begin{proof}
The strategy is as follows. Let $s=\frac{1}{\alpha}>2$.
Let $N_s$ be number of vertices with radial coordinate at least $R-s$ and with no neighbour with radial coordinate at least $R-s$.
Let $M_s$ be the number of vertices with radial coordinate at most $R-s$. Hence, $M_s$ is the number of points 
of $\PP$ inside the disk of radius $R-s$ and $N_s$ is a subset of the annulus 
$\An{s} = \Dis{R} \setminus \Dis{R-s}$ of width $s$. 
If there is a perfect matching, then $M_s \geq N_s$ because a vertex with no neighbour with radius at least 
$R-s$ must be matched to a vertex with radius less than $R-s$, so distinct vertices counted by $N_s$ must be matched to distinct vertices counted by $M_s$. 
If it is shown that $M_s$ and $N_s$ are concentrated around their expectation w.p. $1-o(n^{-1/2})$ and that 
$\E M_s =(1+o(1)) c_{M_s} n$ 
and $\E N_s =(1+o(1)) c_{N_s} n$ as $n \rightarrow \infty$ and $c_{M_s} < c_{N_s}$, then there will be no near perfect matching and hence no Hamilton cycle w.p. $1-o(n^{-1/2})$.

We observe that $M_s \stackrel{\Delta}{=} \Po (\mu_{n,\alpha,\nu}(\Dis{R-s}))$, where
\begin{align*}
\mu_{n,\alpha, \nu} (\Dis{R-s}) = n \cdot   \frac{ \cosh \alpha (R-s) -1}{\cosh \alpha R -1}  
\cdot \frac{1}{2\pi}\int_{0}^{2\pi} d\theta
\sim n \cdot  \frac{\frac{1}{2}e^{\alpha(R-s)}}{\frac{1}{2}e^{\alpha R} }
 =   n e^{-\alpha s} =  n  e^{-1}.
\end{align*}

As $M_s$ is Poisson distributed, we deduce that  $\E M_s = \Var M_s \sim ne^{-1}$. (Here and elsewhere 
we write $a_n \sim b_n$ to denote that $a_n/b_n=1+o(1)$.)
By Chebyshev's inequality, it follows that for all 
$\epsilon>0$
\begin{align*}
\Pee(|M_s - \E M_s| \geq \epsilon \E M_s) \leq \frac{\Var(M_s)}{\epsilon^2 (\E M_s)^2} = O(n^{-1}) = o(n^{-1/2}).
\end{align*}

Our aim now is to give a lower bound on $\E N_s$. To this end, we will show that $\Var(N_s) / \E^2 N_s=
 o(n^{-1/2})$. 

For a point $p \in \D$ we let $\lambda (p, \PP)$ denote the indicator random variable which is equal to 1 if and 
only if $\PP (\check{B}_s(p)):=\PP ( (B(p) \setminus \{p\} ) \cap \An{s}) = \emptyset$. 
In other words, $\lambda (p,\PP)$ is equal to 1 if and only if no point of 
$\PP \setminus \{p\}$ is contained in $B(p) \cap \An{s}$. 

We can write 
$$N_s = \sum_{p \in \PP \cap \An{s}} \lambda (p,\PP).$$
The Campbell-Mecke formula~\eqref{eq:Mecke} will allow us to calculate the expected value of $N_s$:
\begin{align}\label{eq:expectation_Ns}
\E N_s &= n\cdot  \frac{1}{2\pi} \int_{\An{s}}\E \lambda ((r,\theta), \PP ) f_{n, \alpha, \nu} (r, \theta) dr d\theta \nn \\
&= n \cdot  \frac{1}{2\pi}  \int_{R-s}^R \int_0^{2\pi} \E \lambda ((r,\theta), \PP ) f_{n, \alpha, \nu} (r, \theta) dr 
d\theta \nn \\
&= n \cdot  \int_{R-s}^R  \E \lambda ((r,0), \PP ) f_{n, \alpha, \nu} (r, \theta) dr,
\end{align}
where the first equality holds since $\lambda((r,\theta),\PP)=0$, if  and only if $r < R -s$ and the last one since 
$\E \lambda ((r,\theta), \PP )$ is invariant with respect to $\theta$.

We have 

$$\check{B}_s ((r,0)) =\{(r',\theta') \in \R^2: R-s \leq r' < R, |\theta'| \leq \theta_R (r,r') \},$$ 

and, therefore, 
\begin{align*}
\im(\check{B}_s ((r,0))) =n \cdot \int_{R-s}^R \frac{2\theta_R (r,r')}{2\pi} \frac{\alpha \sinh \alpha r'}{\cosh \alpha R -1}dr'.
\end{align*}
We can give an asymptotic approximation to this integrand. 
From Lemma~\ref{lem:adjang}, we infer that for $n$ large enough, uniformly over all $r,r' \geq R-s$:
\begin{align*}
\theta_R (r,r') = 2 e^{\frac{R-r-r'}{2}}(1+O(e^{R-r-r'})) = 2 e^{\frac{R-r-r'}{2}}(1+O(e^{-R}) ),
\end{align*}
and 
\begin{align*} 
\frac{\sinh \alpha r'}{\cosh \alpha R -1} \sim e^{-\alpha (R-r')}.
\end{align*}
Therefore, 
\begin{align*}
\im (\check{B}_s ((r,0))) & \sim n \cdot e^{-r/2} \frac{\alpha}{\pi} \int_{R-s}^R  e^{(1/2-\alpha) (R-r')} dr' 
= n \cdot e^{-r/2} \frac{\alpha}{\pi} \int_{0}^s  e^{(1/2-\alpha) y} dy  \\
&\stackrel{n= \nu e^{R/2}}{=} \nu \cdot e^{(R-r)/2} \frac{\alpha}{\pi (1/2 -\alpha)} \left( e^{(1/2-\alpha)s} - 1\right). 
\end{align*}
But $R-r \leq s$.
We use that $1/s = \alpha$, and set $$e^{s/2}\frac{\nu \alpha}{\pi (1/2 -\alpha)} \left( e^{1/(2\alpha)-1} - 1\right) =: 
\nu c_{\alpha}.$$
Thus, for $n$ sufficiently enough $\im (\check{B}_s(p)) \leq \nu c_\alpha$.
So for any such $n$ 
$$ \E \lambda ((r,0), \PP) = \mathbb{P} (\Po (\im (\check{B}_s (p))) = 0) \geq e^{-\nu c_\alpha}.$$

If we substitute this into~\eqref{eq:expectation_Ns}, we get the following lower bound: 
\begin{align*}
\E N_s &\geq n e^{-\nu c_\alpha} \cdot  \int_{R-s}^R  f_{n, \alpha, \nu} (r, \theta) dr \\
&= n e^{-\nu c_\alpha}  \cdot \frac{\cosh (\alpha (R))- \cosh (\alpha (R-s)))}{\cosh (\alpha R)-1} \sim 
n e^{-\nu c_\alpha} (1- e^{-1}).
\end{align*}
As $e> 2$, we have that $e^{-1} < 1 - e^{-1}$. Select $\nu > 0$ sufficiently small so that 
$$ e^{-1} < e^{-\nu c_\alpha} (1- e^{-1}).$$
Thereafter, we choose $\epsilon>0$ sufficiently small so that 
$$ e^{-1} (1+\epsilon) < (1-\epsilon) e^{-\nu c_\alpha} (1- e^{-1}). $$

We will use Chebyshev's inequality to show that w.p. $1-o(n^{-1/2})$
$$N_s \geq (1-\epsilon) \E N_s. $$ 
Since $M_s < n e^{-1} (1+\epsilon)$ w.p. $1-o(n^{-1/2})$, the union bound implies 
 $M_s < N_s$ w.p. $1-o(n^{-1/2})$. 

We will show that $\Var (N_s) = O(n)$. 
To bound the variance, let us set 
$$\Cov (p_1,p_2) := \E(\lambda(p_1,\PP) \lambda (p_2, \PP)) - \E (\lambda (p_1, \PP)) \E (\lambda (p_2, \PP)).$$ 
We write 

\begin{align*}
\Var (N_s) = \E \left( \sum_{p_1,p_2 \in \PP,\atop \text{distinct}} \Cov (p_1, p_2 )\right) 
+\E \left( \sum_{p \in \PP} \Cov (p,p)\right).
\end{align*}

We will use the Campbell-Mecke formula~\eqref{eq:Mecke} to calculate these sums. 
For the former one, we have:

\begin{align*}
\E & \left( \sum_{p_1,p_2 \in \PP,\atop \text{distinct}} \Cov (p_1, p_2 )\right)  \\
&=  n^2 \cdot \left( \frac{1}{2\pi} \right)^2 \int_{R-s}^R \int_0^{2\pi}\int_{R-s}^R \int_0^{2\pi}   
\E (\Cov ((r_1,\theta_1),(r_2,\theta_2))) f_{n,\alpha, \nu} (r_1) f_{n,\alpha, \nu} (r_2)
dr_1 d\theta_1 dr_2 d\theta_2.
\end{align*}

Let $p_1, p_2  \in \An{s}$. We claim that for $n$ sufficiently large,
$\check{B}_s (p_1)  \cap \check{B}_s (p_2) = \emptyset$, if 
$\theta (p_1,p_2) > 5\nu e^{s}/n$.
Indeed, Lemma~\ref{lem:adjang} implies that for any $r, r' > R-s$, we have 
$\theta_R (r,r') < 2 e^{\frac12 (R-r-r')} (1 + O(e^{\frac32(R-r-r')})) < 2 e^{s - R/2} (1+O(e^{-\frac32 R})) < 2.5 e^{s-R/2}$, 
for $n$ sufficiently large.  
Thus, for any such $n$, if $p \in \check{B}_s (p_1)  \cap \check{B}_s (p_2)$, then 
$\theta (p,p_1), \theta (p,p_2) < 2.5 e^{s-R/2}$. This would imply that $\theta (p_1,p_2) < 5 e^{s-R/2}$. 
So if $\theta (p_1,p_2) > 5 e^{s-R/2}$, then $\check{B}_s (p_1)  \cap \check{B}_s (p_2) = \emptyset$.

This implies that for $n$ sufficiently large, whenever $\theta (p_1,p_2) > 5\nu e^{s}/n$, we have 
$\E (\Cov (p_1,p_2))=0$. Moreover, $| \E (\Cov (p_1,p_2)) | \leq 1$. 
So, 

\begin{align*}
\E  \left( \sum_{p_1,p_2 \in \PP,\atop \text{distinct}} \Cov (p_1, p_2 )\right)  = O(n).
\end{align*}

Regarding the second term, we use that $\E \Cov (p,p) \leq \E \lambda (p,\PP)$ and bound 
$$ \E \left( \sum_{p \in \PP} \Cov (p,p)\right) \leq \E \left( \sum_{p \in \PP} \E \lambda (p,\PP) \right) = \E (N_s) = O(n).$$
These two imply that 
$$ \Var (N_s) = O(n).$$ 
 Chebyshev's inequality yields
$$\Pee (N_s \geq (1-\epsilon) \E N_s ) \leq \frac{\Var (N_s)}{\epsilon^2 \E^2 (N_s)} = O\left(\frac1n \right)
=o(n^{-1/2}). $$
\end{proof}

\section{Existence of Hamilton cycles for  sufficiently large $\nu$}

The aim of this section is to prove the existence of a Hamilton cycle in $\GPo (n;\alpha,\nu)$ when $\nu$ is large enough with sufficiently 
high probability. 
\begin{thm}\label{thm:HC}
For all positive real $\alpha < \frac{1}{2}$, there is a $\nu_1=\nu_1(\alpha)$ such that for all $\nu >\nu_1$, the random graph $\GPo(n;\alpha,\nu)$ has a Hamilton cycle and hence also a near perfect matching with 
probability $1-o(n^{-1/2})$.
\end{thm}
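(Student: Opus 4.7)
The plan is to carry out the tessellation-and-iterative-merging strategy sketched in the introduction. Partition $\Dis{R}$ into concentric annular bands $B_k := \{p : R - (k+1)\Delta \leq r(p) < R - k\Delta\}$ for $k = 0, 1, \ldots, L$, where $\Delta > 0$ is a constant to be tuned and $L$ is chosen so that $\Dis{R - L\Delta}$ is a small central disk. Inside each band, subdivide the angular range into sectors of width $\theta_k := c_0 e^{k\Delta - R/2}$; by Lemma~\ref{lem:adjang}, choosing $c_0$ small enough guarantees the three geometric properties (a) any two points in the same sector of $B_k$ are adjacent, (b) any two points in angularly adjacent sectors of $B_k$ are adjacent, and (c) every point in a sector of $B_k$ is adjacent to every point of $B_{k-1}$ lying in the matching angular arc. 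A short computation from~\eqref{eq:pdf} shows that the expected number of Poisson points in a single sector of $B_k$ is $\Theta(\nu \, e^{(1-\alpha) k \Delta})$, at least $\Omega(\nu)$ at the boundary and growing exponentially with the depth.

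Given these geometric properties, I would build the Hamilton cycle iteratively from the boundary inward. In band $B_0$, in each sector with enough points form a local path through its vertices (trivial because they are mutually adjacent); sectors with too few points contribute their vertices as loose isolated points to be absorbed later. For $k = 1, 2, \ldots, L$, use the vertices of each sector of $B_k$ as ``merging hubs'' which, by property (c), are adjacent to every vertex of all the $B_{k-1}$-sectors sharing the same angular arc. Stitch the paths and cycles coming out of those $B_{k-1}$-sectors into one longer cycle by routing it through the $B_k$ hubs, and absorb any loose vertices by the standard cut-and-splice operation: delete an edge $\{u,v\}$ whose endpoints are both adjacent to a new vertex $x$ and replace it by $\{u,x\}, \{x,v\}$. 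Because the sector widths $\theta_k$ grow exponentially in $k$, by the time we reach $B_L$ only a constant number of cycles remain, and these are merged into a single Hamilton cycle using the few vertices of $\Dis{R - L\Delta}$, whose neighborhoods essentially cover $\Dis{R}$.

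The main obstacle, and the source of the constant $\nu_1 = \nu_1(\alpha)$, is turning this deterministic scheme into the required $1 - o(n^{-1/2})$ probabilistic guarantee. Each sector's ``enough-vertices'' event has complementary probability $e^{-\Theta(\nu)}$ by Lemma~\ref{lem:Chernoff}, but the total number of sectors grows polynomially in $n$, so a naive per-sector union bound would force $\nu$ to grow with $\log n$ rather than be a constant. The remedy is that one does not need every outer sector to be good: it suffices to control the \emph{proportion} of bad sectors in each band, and to observe that a single sector of $B_k$ spans $\Theta(e^{\Delta})$ sectors of $B_{k-1}$ so the merging step at level $k$ can tolerate a bounded density of bad outer sectors. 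The tail bound on the number of bad sectors per band is itself a Poisson large-deviation estimate to which Lemma~\ref{lem:Chernoff} applies, and for $\nu$ sufficiently large in terms of $\alpha$ its complement is $o(n^{-1/2})$. The rest of the argument is explicit routing of the cycle through the specified band-to-band merging steps.
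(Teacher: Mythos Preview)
Your tessellation-and-iterative-merging strategy is exactly the paper's approach, and you have correctly identified the central difficulty: a per-sector union bound over $\Theta(n)$ outer sectors would force $\nu$ to grow with $\log n$.  However, your proposed remedy --- controlling the global \emph{proportion} of bad sectors in each band via a single Poisson large-deviation bound --- does not close the gap.  The merging operation is local: a sector of $B_k$ can only absorb unmerged cycles and loose points from the sectors of $B_{k-1}$ lying directly beneath it.  Knowing that the total fraction of bad outer sectors is small does not prevent those bad sectors from clustering under one inner sector, which would then be overwhelmed regardless of how many points it contains.  Global proportion control is therefore the wrong invariant.

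What the paper does instead is to track a \emph{local} quantity.  For each tile $T_{i,j}$ it defines a ``demand'' $D_{i,j}$ equal, roughly, to the number of unmerged components below $T_{i,j}$ that still need to be absorbed; formally $D_{0,j}$ depends only on $N(T_{0,j})$ and $D_{i,j}=\max\{D_{i-1,2j}+D_{i-1,2j+1}+3-N(T_{i,j}),0\}$.  The key technical lemma is that $\Pee(D_{i,j}\ge t)\le e^{-ct}$ with a rate $c>0$ uniform in $i$.  Its proof is an induction on $i$ in which the rate is allowed to drop by $1/i^2$ at each step (so one works with $c_i=c+\sum_{k>i}1/k^2$), absorbing the polynomial factor $(t{+}1)$ that appears when one convolves two exponential tails.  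The growth $\mu_i=\Theta(\nu\,2^{i(1-2\alpha)})$ of the expected point count is what makes the recursion close for large enough $\nu$.  At the top layer $i_{\max}$ one then has $\mu_{i_{\max}}=n^{\Theta(1)}$ and only $n^{0.1}$ tiles, so a union bound combining the demand-tail lemma with Chernoff for $N(T_{i_{\max},j})$ gives the required $1-o(n^{-1/2})$.  This inductive exponential-tail argument is the missing ingredient in your sketch; once you have it, the deterministic routing you describe is essentially Lemmas~\ref{lem:cyclemerging}--\ref{lem:detconclu}.
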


\subsection{A useful tiling}

We consider the following tiling

\begin{align*}
T_{i,j} =\{ (r,\theta) \in \mathcal{D}_R: R-(i+1)2 \ln 2 \leq r <R- i2\ln 2, j \frac{2\pi}{n_{i}}< \theta \leq (j+1) \frac{2\pi}{n_{i}} \}
\end{align*} 

where $n_{i} =n_{i,R}= 2^{4-i+\lfloor \frac{R}{2\ln 2} \rfloor} \in \N_0:=\N \cup \{ 0\}$, for $i \in \N_0$, $i \leq i_{max}=\lceil \frac{0.9 R}{2\ln 2} \rceil$ and $j \in \N_0$, $j < n_{i}$. We call $i,j$ admissible if they satisfy these constraints. Note that for all admissible $i$, the parameter $n_{i}$ is an integer and, in fact, a power of $2$, as the exponent $4-i+\lfloor \frac{R}{2\ln 2} \rfloor$ is an integer. Moreover, for $i \leq i_{max}$ the exponent is also at least $4-\frac{0.9 R}{2\ln 2}+\frac{R}{2\ln 2} -1= 3 + 0.1 \frac{R}{2\ln 2}>0$.

We call the collection of tiles with a fixed given $i$ the \emph{$i$-th layer}. These are the tiles in the $i$-th annulus where we start counting from zero at the boundary of the disk. Note that there are $n_{i}$ tiles in the $i$-th layer and the tiling covers the annulus with exterior radius $R$ and interior radius $R-i_{max} 2\ln 2 = (1+o(1))0.1 R$ (in particular, the most interior layer $i_{max}$ is contained in the smaller disk with radius $\frac{R}{2}$ around the origin). A schematic picture is shown in Figure~\ref{fig:diskTiling}.


\begin{figure}
\includegraphics[scale=0.6]{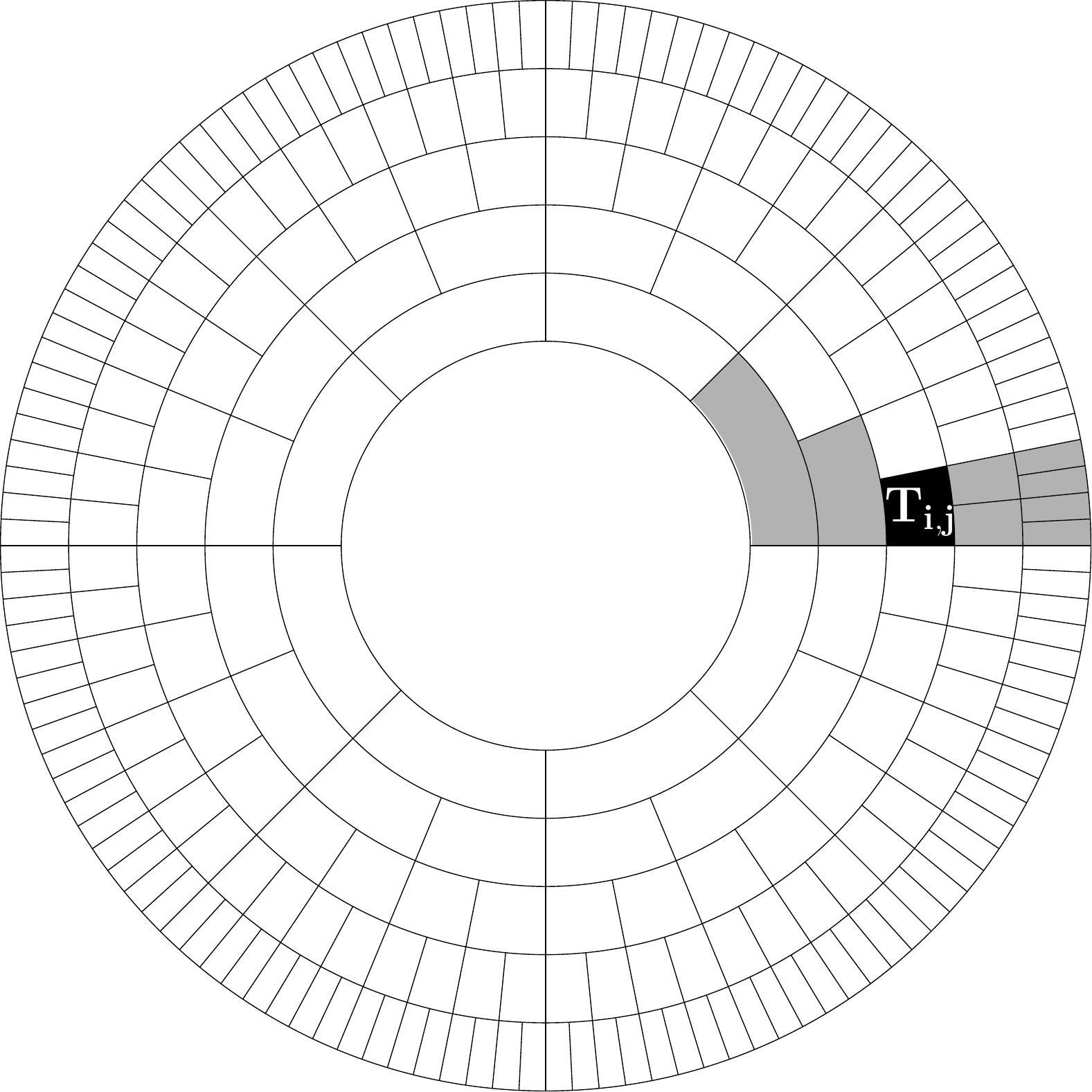}
\caption{(Partial) tiling in the hyperbolic disk; example of a tile $T_{i,j}$ (coloured black) and the tiles which are guaranteed to lie within its neighbourhood ball by Lemma~\ref{lem:adjtiles} (coloured black and grey).
}\label{fig:diskTiling}
\end{figure}

We say that a tile $T_{i',j'}$ is \emph{below} the tile $T_{i,j}$, if $i'\leq i$ and the sector defined by $T_{i,j}$ 
contains $T_{i',j'}$. 
\begin{lemma}[Adjacency among the tiles]\label{lem:adjtiles}
For admissible indices $i,j$, any point $p \in T_{i,j}$ is within distance $R$ from any point $p'$ in any tile below the tile $T_{i,j}$.
\end{lemma}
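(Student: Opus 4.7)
The plan is to reduce the claim to a comparison of angular coordinates. Writing $r_1 = r(p)$ and $r_2 = r(p')$, the characterisation following~\eqref{eq:cosines_law} says that $d_\Haa(p,p') \leq R$ if and only if $\theta(p,p') \leq \theta_R(r_1,r_2)$. Since $T_{i',j'}$ is contained in the angular sector of $T_{i,j}$, both $p$ and $p'$ have angular coordinates inside an arc of length $2\pi/n_{i}$, so $\theta(p,p') \leq 2\pi/n_{i}$. Hence it is enough to show that either $r_1+r_2 \leq R$ (in which case $d_\Haa(p,p') \leq r_1+r_2 \leq R$ by the triangle inequality) or $\theta_R(r_1,r_2) \geq 2\pi/n_{i}$.

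Assuming then that $r_1+r_2 > R$, I would apply Lemma~\ref{lem:adjang}; note that since $i,i' \leq i_{max}$, both $r_1,r_2$ lie above $0.1R - O(1)$, which is $\geq \epsilon R$ for any $\epsilon<0.1$ and $R$ large, so the lemma does apply. The defining tile bounds $r_1 < R - i\cdot 2\ln 2$ and $r_2 < R - i'\cdot 2\ln 2$ give $R - r_1 - r_2 > -R + (i+i')\cdot 2\ln 2$, so $e^{(R-r_1-r_2)/2} > 2^{i+i'}e^{-R/2}$, and the lower bound of the lemma then produces $\theta_R(r_1,r_2) \geq 2^{i+i'+1} e^{-R/2}\cdot(1-o(1))$. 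On the other side, $\lfloor R/(2\ln 2)\rfloor \geq R/(2\ln 2) - 1$ yields $n_{i} \geq 2^{3-i} e^{R/2}$ and hence $2\pi/n_{i} \leq \pi\cdot 2^{i-2}e^{-R/2}$. The ratio of these two bounds is $2^{i'+3}/\pi \geq 8/\pi > 1$ uniformly in $i,i' \geq 0$, so $\theta_R(r_1,r_2) \geq 2\pi/n_{i}$ for $R$ large enough, finishing the argument.

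The main obstacle I foresee is justifying the $(1-o(1))$ factor uniformly in $(i,i')$, since the error term $Ke^{3(R-r_1-r_2)/2}$ in Lemma~\ref{lem:adjang} is only genuinely small when $r_1+r_2$ is comfortably larger than $R$. My plan to handle this is to split cases: when $i,i' \geq K/(2\ln 2)$ we have $r_1, r_2 < R - K$, and the ``moreover'' clause of Lemma~\ref{lem:adjang} already provides the clean bound $\theta_R \geq 2e^{(R-r_1-r_2)/2}$ without any error term; for the remaining finitely many pairs of small $(i,i')$, we have $r_1, r_2 \geq R - O(1)$, so $R - r_1 - r_2 \leq -R + O(1)$ and the error term $Ke^{3(R-r_1-r_2)/2} = O(e^{-3R/2})$ is negligible compared to the main term $2e^{(R-r_1-r_2)/2} \geq e^{-R/2}$, so the bound still goes through in this regime as well.
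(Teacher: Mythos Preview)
Your approach is exactly the paper's: bound the angular distance by the angular width $2\pi/n_i$ of the tile, handle $r_1+r_2\le R$ by the triangle inequality, and otherwise use Lemma~\ref{lem:adjang} to show $\theta_R(r_1,r_2)$ exceeds that width. Your use of the sharper bound $r_2 < R - i'\cdot 2\ln 2$ (where the paper just takes $r_2<R$) is harmless and gives the extra factor $2^{i'}$.

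There is, however, a small gap in your case split for controlling the error term. The complement of ``$i,i'\ge K/(2\ln 2)$'' is not ``finitely many pairs with both $i,i'$ small''; it is ``at least one of $i,i'$ is small''. Since $i'\le i$ by the definition of ``below'', this really means $i' < K/(2\ln 2)$ while $i$ may be arbitrarily large, so you cannot claim $r_1\ge R-O(1)$ in this regime. The fix is immediate and is what the paper does: in this remaining case $r_2\ge R-(i'+1)2\ln 2 = R-O(1)$, and $r_1\ge (1+o(1))0.1R$ by admissibility, so $R-r_1-r_2 \le -(1+o(1))0.1R + O(1)$. Hence the error term satisfies $Ke^{3(R-r_1-r_2)/2}=o\bigl(e^{(R-r_1-r_2)/2}\bigr)$ uniformly, and your $(1-o(1))$ factor is justified. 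With this correction the argument goes through.
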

\begin{proof}
Let $p=(r,\theta) \in T_{i,j}$ and $p'=(r',\theta') \in T_{i',j'}$ be a vertex in any tile below tile $T_{i,j}$ (in the sense of the statement above). 
Note that $r' \ge r$ must hold.
Then, the angular distance $|\theta-\theta'|_{2\pi}$ between $p$ and $p'$ is at most the angular width of the tile $T_{i,j}$ which is 
$$\frac{2\pi}{n_{i}} = 2^{-3}\pi 2^{i-\lfloor \frac{R}{2\ln 2} \rfloor} \leq 2^{i-1} e^{-\frac{R}{2}}.$$

On the other hand, we know that the radial coordinates satisfy $r \leq R- i 2\ln 2$ and $r' <R$. If $r+r'\leq R$, we have adjacency by the triangle inequality. If $r+r' > R$ and using that $r,r' \geq (1+o(1))0.1 R$ (as remarked earlier), we distinguish two cases:
\begin{enumerate}
\item If $r,r' <R-K$ (with $K$ as in Lemma~\ref{lem:adjang}), then by the last part of Lemma~\ref{lem:adjang}, it holds that
\begin{align*}
\theta_R (r,r') \geq 2 e^{\frac{R-r-r'}{2}} \geq 2 e^{\frac{i2\ln 2 - R}{2}} = 2^{i+1} e^{-\frac{R}{2}}.
\end{align*}
\item Otherwise we may assume that $r' \ge R-K$ holds, while still $r,r' \geq (1+o(1))0.1R$. 
Therefore, $R-r-r' \leq R-(R-K)-(1+o(1))0.1R = -(1+o(1))0.1R +K$, hence the error term in 
Lemma~\ref{lem:adjang} is $K e^{\frac{3}{2}(R-r-r')} = o(e^{\frac{1}{2}(R-r-r')})$ and it follows that

\begin{align*}
\theta_R(r,r') \geq 2e^{\frac{R-r-r'}{2}} - o(e^{\frac{1}{2}(R-r-r')}) > e^{\frac{R-r-r'}{2}} \geq e^{\frac{i2\ln 2-R}{2}} = 2^i e^{-\frac{R}{2}}.
\end{align*}

\end{enumerate}

We conclude that $|\theta-\theta'|_{2\pi} \leq \theta_R(r,r')$ from which the claim follows.
\end{proof}

We will denote by $N(T_{i,j})$ the number of points falling into $T_{i,j}$.

\begin{lemma}[Expected number of points in a tile]\label{lem:exppoints}
Let $\alpha, \nu >0$. For admissible indices $i,j$, the expected number of points falling into $T_{i,j}$ satisfies: 
$$\E N(T_{i,j}) =\Theta(\nu 2^{i(1-2\alpha)} ).$$
\end{lemma}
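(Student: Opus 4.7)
The plan is to compute the expected number of points in $T_{i,j}$ directly from the density, exploiting the product structure of the tile (a rectangle in $(r,\theta)$-coordinates) and the fact that $f_{\alpha,R}$ has no $\theta$-dependence. Concretely, I would write
\[
\E N(T_{i,j}) \;=\; n \cdot \frac{1}{2\pi}\int_{T_{i,j}} f_{\alpha,R}(r,\theta)\, dr\, d\theta \;=\; n \cdot \frac{1}{n_i}\cdot\int_{R-(i+1)2\ln 2}^{R-i\cdot 2\ln 2} \frac{\alpha\sinh(\alpha r)}{\cosh(\alpha R)-1}\, dr,
\]
so it suffices to control the two factors $1/n_i$ and the radial integral.

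For the radial integral, I would use the elementary antiderivative of $\sinh(\alpha r)$ to write it as
\[
\frac{\cosh\!\bigl(\alpha(R-i\cdot 2\ln 2)\bigr) - \cosh\!\bigl(\alpha(R-(i+1)\cdot 2\ln 2)\bigr)}{\cosh(\alpha R)-1}.
\]
Then I would apply the standard $\cosh x = \tfrac{1}{2}e^{x}(1+o(1))$ asymptotics (uniformly in $i \le i_{\max}$, which stays in a regime where the argument $\alpha(R-(i+1)2\ln 2)$ tends to infinity with $R$), giving
\[
\bigl(1+o(1)\bigr)\bigl(e^{-2\alpha i\ln 2} - e^{-2\alpha(i+1)\ln 2}\bigr) \;=\; \bigl(1+o(1)\bigr)\,2^{-2\alpha i}\bigl(1-2^{-2\alpha}\bigr),
\]
which is $\Theta(2^{-2\alpha i})$ since $1-2^{-2\alpha}$ is a positive constant depending only on $\alpha$.

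For the angular factor, I would use the identity $2^{R/(2\ln 2)} = e^{R/2}$ to see that $n_i = 2^{4-i+\lfloor R/(2\ln 2)\rfloor} = \Theta(2^{-i}\cdot e^{R/2})$, so $1/n_i = \Theta(2^i e^{-R/2})$. Combining with $n=\nu e^{R/2}$ yields $n/n_i = \Theta(\nu\cdot 2^i)$, and multiplying by the radial contribution $\Theta(2^{-2\alpha i})$ gives the desired $\Theta(\nu\cdot 2^{i(1-2\alpha)})$.

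I do not expect any real obstacle here: the only point deserving care is to verify that the $\cosh$-asymptotics hold uniformly across all admissible $i$. Since $i\le i_{\max} = \lceil 0.9 R/(2\ln 2)\rceil$, we have $\alpha(R-(i+1)2\ln 2) \ge \alpha(0.1 R - 2\ln 2) \to \infty$, so the replacement of $\cosh$ by the dominant exponential incurs only a $1+o(1)$ factor uniformly in $i$, and the floor in the definition of $n_i$ contributes at most a bounded multiplicative factor. Thus the $\Theta$-statement holds with constants depending only on $\alpha$.
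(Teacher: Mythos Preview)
Your proposal is correct and follows essentially the same route as the paper's proof: both compute the integral directly, exploit the product structure to separate the angular factor $1/n_i$ from the radial integral, evaluate the latter via the $\cosh$ antiderivative, replace $\cosh$ by the dominant exponential using $R-(i+1)2\ln 2 \gtrsim 0.1R \to \infty$, and then combine $n_i = \Theta(2^{-i}e^{R/2})$ with $n=\nu e^{R/2}$. The only (inconsequential) slip is in your uniformity check, where the lower bound should be $0.1R - 4\ln 2$ rather than $0.1R - 2\ln 2$ after accounting for the ceiling in $i_{\max}$; this does not affect the conclusion.
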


\begin{proof}
The expected number of points falling into $T_{i,j}$ is given by:
\begin{align*}
\im (T_{i,j}) &= n\cdot \int_{R-(i+1) 2\ln 2}^{R-i2\ln 2} \int_{j \frac{2\pi}{n_{i}}}^{(j+1) \frac{2\pi}{n_{i}}} \frac{\alpha \sinh \alpha r}{2\pi(\cosh \alpha R -1)} d\theta dr \\
&=n \cdot \int_{R-(i+1)2\ln 2}^{R-i2\ln 2} \frac{\alpha \sinh \alpha r}{n_{i}(\cosh \alpha R -1)}dr \\
&= n \cdot \frac{\cosh( \alpha (R-i2\ln2)) - \cosh (\alpha (R-(i+1)2\ln 2))}{n_{i}(\cosh \alpha R -1)}.
\end{align*} 
As $i \leq i_{max}$, we have that $R-i2\ln 2 \geq 0.1 R \rightarrow \infty$, and hence
$$\cosh( \alpha (R-i2\ln 2)) =(1+o(1)) \frac{1}{2}e^{\alpha (R-i2\ln 2)},$$
 $$\cosh( \alpha (R-(i+1)2\ln 2)) =(1+o(1))\frac{1}{2}e^{\alpha (R-(i+1)2 \ln 2)}$$ and 
 $$\cosh \alpha R =(1+o(1)) \frac{1}{2}e^{\alpha R}.$$ Furthermore, $n_{i} = 2^{4-i+\lfloor \frac{R}{2\ln 2}\rfloor} =\Theta( 2^{-i+\frac{R}{2\ln 2}})=\Theta(2^{-i}e^{\frac{R}{2}})$. We conclude:
\begin{align*}
\E N(T_{i,j}) &=\Theta\left(n\frac{e^{\alpha (R-i2\ln 2)} - e^{\alpha (R-(i+1)2\ln 2)}}{ 2^{-i}e^{\frac{R}{2}} e^{\alpha R}}\right) = \Theta(n 2^{i} e^{-\frac{R}{2}} e^{-i2\alpha\ln 2} (1-e^{-2\alpha \ln 2})) \\
&= \Theta(n 2^{i(1-2\alpha)} e^{-\frac{R}{2}})
\end{align*}
Finally, using that $R=2\ln \frac{n}{\nu}$, that is, $n= \nu e^{\frac{R}{2}}$, yields the claim.
\end{proof}

\subsection{A procedure for finding a Hamilton cycle}

In this subsection we describe the strategy of our procedure for finding a Hamilton cycle in a graph which is embedded in the hyperbolic disk $\mathcal{D}_R$ and which 
makes use of the tiling $(T_{i,j})_{i,j \in \N_0, i \leq i_{max}, j<n_{i}}$ defined above.
Roughly speaking, the procedure iterates through the layers of the tiling, working upwards from the $0$-th layer to layer $i_{\max}$, 
gathering a suitable collection of vertex-disjoint cycles and isolated vertices. 
When processing the tile $T_{i,j}$, it merges as many vertex-disjoint cycles and isolated points from previous iterations that are below the tile as possible. 
Once the procedure has reached the maximum layer which is completely contained in the smaller disk with radius $\frac{R}{2}$, the procedure 
attempts to merge all the remaining cycles and points. 

We now describe the procedure in more detail.
For each tile $T_{i,j}$ we will define a random variable $D_{i,j}$ called demand, which will be used later in the 
probabilistic analysis to show that the procedure terminates successfully. Recall that $N(T_{i,j})$ denotes the number 
of points in tile $T_{i,j}$ and note that the collection of $N(T_{i,j})$ for admissible $i,j$ are independent Poisson random variables for the poissonised KPKVB model.

\begin{lemma}[Cycle merging, see Figure~\ref{fig:cyclemerge}] \label{lem:cyclemerging}
If the vertices strictly below tile $T_{i,j}$ can be covered by $x$ vertex-disjoint cycles and isolated vertices and the number of vertices 
in tile $T_{i,j}$, $y=N(T_{i,j}) \geq 3$, then the set of all vertices below $T_{i,j}$ (including those in $T_{i,j}$) 
can be covered by $\max \{1,x-y+1\}$ cycles and points.

Furthermore, if additionally $y>x$, then the vertices below $T_{i,j}$ can be covered by a single cycle which has $y-x$ edges within $T_{i,j}$.
\end{lemma}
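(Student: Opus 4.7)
The plan is to reduce the lemma to a single combinatorial merging gadget once the tile-level adjacency structure is pinned down. First, I would invoke Lemma~\ref{lem:adjtiles}: since every tile is below itself (taking $i'=i$, $j'=j$ in the definition), any two points of $T_{i,j}$ lie within hyperbolic distance $R$, and likewise every point of $T_{i,j}$ is adjacent to every point of every tile strictly below $T_{i,j}$. Hence, restricted to the vertices at or below $T_{i,j}$, the $y$ tile vertices form a clique, and the bipartite graph between them and the vertices of the strictly-below cycles/isolated vertices is complete. This is the only geometric input; everything that follows is combinatorial rerouting.

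The central gadget is the following: given $k$ vertex-disjoint cycles or isolated vertices $C_1,\ldots,C_k$ strictly below $T_{i,j}$ and any $k$ tile vertices $v_1,\ldots,v_k$, we can merge them into a single cycle using $v_1,\ldots,v_k$ as ``bridges''. Concretely, for each $C_\ell$ pick any edge $a_\ell b_\ell$ of $C_\ell$ (with the convention $a_\ell = b_\ell$ equal to the unique vertex if $C_\ell$ is an isolated vertex), delete that edge, and stitch everything together along
\[
v_1 \to a_1 \to \cdots \to b_1 \to v_2 \to a_2 \to \cdots \to b_2 \to v_3 \to \cdots \to v_k \to a_k \to \cdots \to b_k \to v_1.
\]
Every incidence of the form $v_\ell a_\ell$ or $b_\ell v_{\ell+1}$ (indices mod $k$) is a valid edge by the complete bipartite structure, so the resulting walk is a cycle in $\GPo(n;\alpha,\nu)$ covering $C_1\cup\cdots\cup C_k\cup\{v_1,\ldots,v_k\}$.

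I would then apply the gadget once in each case. If $y \leq x$, run it with $k=y$ on any $y$ of the $x$ below objects, using all tile vertices as bridges, and leave the other $x-y$ cycles/isolated vertices untouched; this produces $(x-y)+1 = \max\{1, x-y+1\}$ cycles and points covering everything below $T_{i,j}$. If $y > x$ and $x \geq 1$, run the gadget with $k=x$ on all below objects, and then replace the single return edge $b_x\to v_1$ by the path $b_x \to v_{x+1} \to v_{x+2} \to \cdots \to v_y \to v_1$ obtained by inserting the leftover tile vertices in sequence; this yields the single cycle
\[
v_1 \to a_1 \to \cdots \to b_1 \to v_2 \to \cdots \to v_x \to a_x \to \cdots \to b_x \to v_{x+1} \to v_{x+2} \to \cdots \to v_y \to v_1,
\]
whose edges inside $T_{i,j}$ are exactly $v_{x+1}v_{x+2}, v_{x+2}v_{x+3},\ldots,v_{y-1}v_y,v_yv_1$, a total of $y-x$ as required. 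The degenerate case $x=0$ (no below objects) is handled directly by taking the tile cycle $v_1\to v_2\to\cdots\to v_y\to v_1$, which is legitimate precisely because $y\geq 3$.

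I do not foresee a real obstacle: the hypothesis $y\geq 3$ is used only to guarantee that the single output cycle is a genuine cycle (rather than a loop or a ``$2$-cycle''), and the isolated-vertex case is absorbed into the gadget by the $a_\ell=b_\ell$ convention. The most delicate bookkeeping is verifying the edge count $y-x$ inside $T_{i,j}$ in the second statement, which is immediate once the path extension is written out as above.
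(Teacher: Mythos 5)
Your proof is correct and takes essentially the same route as the paper's: the only geometric input is Lemma~\ref{lem:adjtiles} (the tile vertices form a clique and are joined to everything strictly below), and your stitching gadget is exactly the paper's detour construction — replacing an edge $v_\ell v_{\ell+1}$ of the tile cycle by $v_\ell a_* \cdots b_* v_{\ell+1}$ — written out in one shot instead of iteratively. The case analysis and the count of $y-x$ leftover edges inside $T_{i,j}$ agree with the paper's argument.
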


\begin{proof}
If $y=N(T_{i,j}) \geq 3$, then the vertices in $T_{i,j}$ form a cycle by Lemma~\ref{lem:adjtiles}. 
Each of its $y$ edges can be used to merge this cycle in $T_{i,j}$ with a cycle or point strictly below $T_{i,j}$: 
to pick up a cycle, use an edge $e_i = v_i v_{i+1}$ of the cycle $v_1,\dots,v_{y}$ in $T_{i,j}$ and choose 
any edge $e_*=a_* b_*$ from the cycle to be picked up. By Lemma~\ref{lem:adjtiles}, the four endpoints form a 
clique and therefore, we can go along the edges $v_i a_*$, then the cycle to be picked up (without the edge $e_*$), and 
finally along $b_* v_{i+1}$ to bring us back to the cycle in $T_{i,j}$. To pick up a vertex $a_*$ below, we can just
use the edges $v_i a_*$ and $a_* v_{i+1}$ instead of $v_i v_{i+1}$.

If $y\leq x$, then all edges of the original cycle in $T_{i,j}$ will be used and we end up with $x-y+1 \geq 1$ cycles 
and points below (and including) $T_{i,j}$. If $y>x$, then all cycles and points strictly below $T_{i,j}$ become part 
of the original cycle in $T_{i,j}$ and $y-x>0$ edges of the cycle in $T_{i,j}$ remain unused and part of the final cycle. 
\end{proof}
\begin{figure}
\includegraphics[scale=0.3]{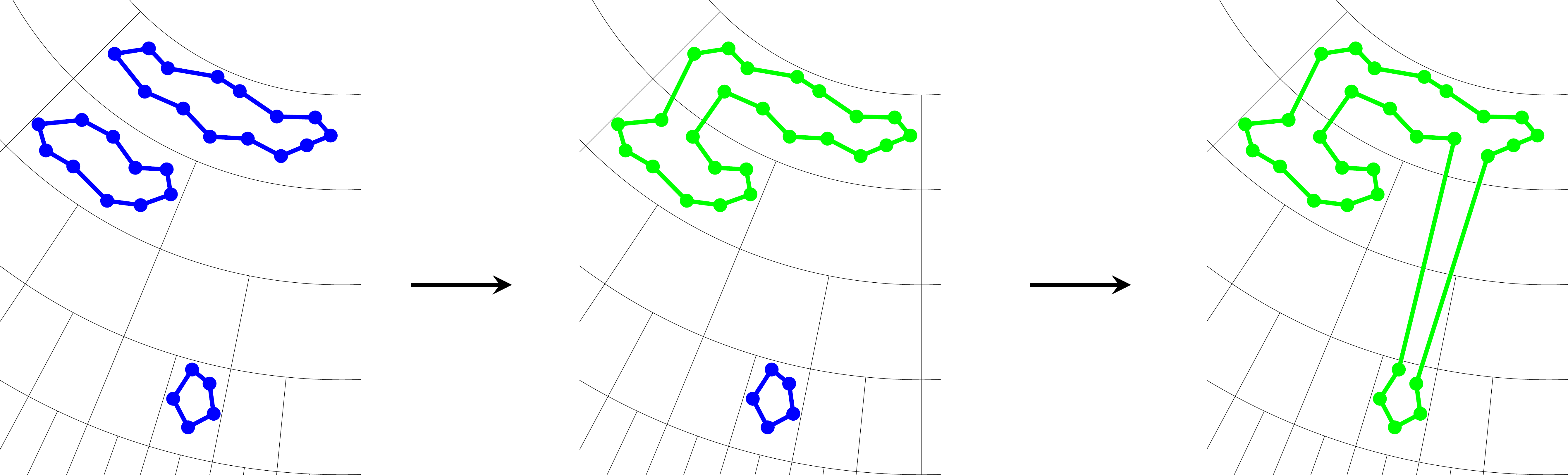}
\caption{In three steps, three cycles (coloured blue) are merged (resulting in the green cycle) by replacing an edge of one cycle 
by a detour around the other cycle. Note that we zoomed into the part of the disk which matters for the cycle merging.}\label{fig:cyclemerge}
\end{figure}

The demand random variables $D_{i,j}$ for admissible $i,j$ are defined in terms of the point counts $N(T_{i,j})$ as follows.
For $i=0$ and $j=0,\dots, n_0-1$ we set:

$$ D_{0,j} = \begin{cases} N(T_{i,j})  & \text{ if } N(T_{i,j}) \in \{1,2\}, \\
              0 & \text{ otherwise, }
             \end{cases}
             $$

\noindent
and, for $0<i\leq i_{\max}$ and $j=0,\dots,n_i-1$ we set:

$$D_{i,j} = \max\{D_{i-1,2j}+D_{i-1,2j+1}+3-N(T_{i,j}),0\}.$$

In particular $D_{i,j}$ and $D_{i,j'}$ are independent for $j\neq j'$, since they depend on disjoint regions. 
Also, the $D_{0,j}$ are i.i.d.~random variables with values in $\{0,1,2\}$ satisfying
$$\pr(D_{0,j}=1)=\mu_0 e^{-\mu_0}, \qquad \pr(D_{0,j}=2) = \frac{\mu_0^2}{2}e^{-\mu_0},$$
where we used the notation $\mu_0:=\im (T_{0,0})\stackrel{Lemma~\ref{lem:exppoints}}{=}\Theta (\nu )$.
%


\begin{lemma}\label{lem:noinneed}
For admissible indices $i,j$, if $D_{i,j} = x$, then the vertices below (and in) $T_{i,j}$ can be covered by at most $x+1$ vertex-disjoint cycles and isolated points (in total).

Moreover, if $i>0$ and $D_{i,j}=0$, then the vertices below (and in) $T_{i,j}$ can be covered by exactly one cycle which has at least one edge which is completely contained in $T_{i,j}$.
\end{lemma}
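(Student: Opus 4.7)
The plan is to prove the lemma by induction on $i$. For the base case $i=0$, the three subcases $N(T_{0,j})=0$, $N(T_{0,j})\in\{1,2\}$, and $N(T_{0,j})\geq 3$ can each be verified by direct inspection: in the first two the vertices of $T_{0,j}$ may be taken as $D_{0,j}=N(T_{0,j})$ isolated points, while in the third Lemma~\ref{lem:adjtiles} guarantees that the vertices in $T_{0,j}$ induce a clique, so a single Hamilton cycle on $T_{0,j}$ suffices (matching $D_{0,j}+1=1$). Note that the moreover clause is not asserted at level $i=0$.

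For the inductive step, my plan is to combine the inductive bounds applied to the two child tiles $T_{i-1,2j}$ and $T_{i-1,2j+1}$ to get that the vertices strictly below $T_{i,j}$ are covered by at most $x := D_{i-1,2j}+D_{i-1,2j+1}+2$ vertex-disjoint cycles and isolated points, and then to do a case split on $y := N(T_{i,j})$. When $y\geq 3$, Lemma~\ref{lem:cyclemerging} applies directly and gives a covering with $\max\{1,x-y+1\}$ components; a quick comparison against the recursion $D_{i,j}=\max\{D_{i-1,2j}+D_{i-1,2j+1}+3-y,0\}$ shows this is at most $D_{i,j}+1$. When $y\in\{0,1\}$ the trivial bound $x+y \leq D_{i,j}+1$ suffices.

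The hard part is the case $y=2$, since Lemma~\ref{lem:cyclemerging} does not apply and the two vertices $u,v \in T_{i,j}$ form only an edge, not a cycle. My plan here is to exploit the single edge $uv$ together with the fact, from Lemma~\ref{lem:adjtiles}, that both $u$ and $v$ are adjacent to everything strictly below. If $x\geq 1$, I absorb exactly one component from below into $\{u,v\}$: either form a triangle $uvw$ with an isolated $w$, or reroute a cycle $C$ through $u$ and $v$ by replacing an edge $wa \in C$ with the path $w,u,v,a$. Either way, the resulting covering uses at most $x \leq D_{i,j}+1$ components. If $x=0$, then $u$ and $v$ stand alone as two isolated points, which fits into $D_{i,j}+1 \geq 2$.

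For the moreover clause, I plan to observe that $D_{i,j}=0$ together with the recursion forces $N(T_{i,j}) \geq D_{i-1,2j}+D_{i-1,2j+1}+3 > x$, placing me in the strict inequality regime of the second part of Lemma~\ref{lem:cyclemerging}. That conclusion immediately yields a single covering cycle with $y-x \geq 1$ unused edges remaining inside $T_{i,j}$, exactly as required. Thus the only nontrivial case in the whole argument is the $y=2$ subcase, which the demand formula has been calibrated to accommodate via the single-edge absorption step above.
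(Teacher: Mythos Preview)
Your argument follows the same inductive strategy as the paper, and in fact your treatment of the case $y=N(T_{i,j})=2$ is \emph{more careful} than the paper's. The paper simply asserts that when $N(T_{i,j})\leq 2$ the points of $T_{i,j}$ ``just remain as single points'' and that the resulting count $D_{i-1,2j}+D_{i-1,2j+1}+4$ is at most $D_{i,j}+1$; but for $y=2$ one has $D_{i,j}+1=D_{i-1,2j}+D_{i-1,2j+1}+2$, so that inequality is false. Your edge-absorption trick (forming a triangle with an isolated $w$, or rerouting one cycle edge through $u,v$) is exactly what is needed to recover the two missing units, and it works because Lemma~\ref{lem:adjtiles} makes $u,v$ adjacent to each other and to every vertex strictly below.

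One small point to tidy up: you define $x:=D_{i-1,2j}+D_{i-1,2j+1}+2$, so $x\geq 2$ always, which makes your case ``$x=0$'' vacuous and your case ``$x\geq 1$'' not quite what you want. The split should be on the \emph{actual} number of components in the covering strictly below (call it $x'\leq x$): if $x'\geq 1$ you absorb one component and end with $x'\leq x=D_{i,j}+1$ components; if $x'=0$ there is nothing below and the two isolated points $u,v$ give $2\leq x=D_{i,j}+1$. With that adjustment your proof is complete, and the moreover clause is exactly as you describe via the second part of Lemma~\ref{lem:cyclemerging}.
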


\begin{proof} The proof is by induction on $i$.
For $i=0$, the claim is clear because then $D_{0,j}=0$ implies that there is either one cycle or no vertex in $T_{i,j}$.

For $i>0$, assuming the claim for $i-1$ we show it for $i$. By the induction hypothesis, the vertices below $T_{i-1,2j}$ ($T_{i-1,2j+1}$, respectively) 
can be covered by $D_{i-1,2j}+1$ ($D_{i-1,2j+1}+1$, respectively) many vertex-disjoint cycles and isolated points. 
Thus, the vertices strictly below $T_{i,j}$ can be covered by $D_{i-1,2j}+D_{i-1,2j+1}+2$ many vertex-disjoint cycles and isolated points in total. 
If $N(T_{i,j}) \geq 3$, then by Lemma~\ref{lem:cyclemerging}, the vertices below $T_{i,j}$ can be covered by

$$ \begin{array}{rcl} 
\max \{1,D_{i-1,2j}+D_{i-1,2j+1}+2-N(T_{i,j})+1\} & \leq & \max\{D_{i-1,2j}+D_{i-1,2j+1}+3-N(T_{i,j}),0\} +1 
\\ & = & D_{i,j}+1.
       \end{array} $$

If $N(T_{i,j}) \leq 2 <3$, then the points in $T_{i,j}$ just remain as single points and the area in and below $T_{i,j}$ can be covered by 
 at most $D_{i-1,2j}+D_{i-1,2j+1}+4 \leq D_{i,j}+1$ vertex-disjoint cycles and isolated points.

In particular, if $D_{i,j}=0$, the points in the area in and below $T_{i,j}$ can be covered by one cycle or point. 
If $i>0$, then the condition $D_{i,j}=0$ and the definition of $D_{i,j}$ imply that there are at least 3 points in 
$T_{i,j}$. Hence, the vertices in and below $T_{i,j}$ can be covered by exactly one cycle, which will have
at least one edge with both endpoints inside $T_{i,j}$ (using that $N(T_{i,j} ) > D_{i-1,2j}+D_{i-1,2j+1}+2$).
\end{proof}

\begin{lemma}\label{lem:detconclu}
If $D_{i,j}=0$ for $i=i_{max}$ and for all $j=0,\dots,n_{i}-1$, then there is a Hamilton cycle.
\end{lemma}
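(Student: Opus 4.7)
The plan is to combine Lemma~\ref{lem:noinneed} at the top layer with explicit cycle-merging and vertex-insertion operations, all carried out inside the ``central clique'' of $\GPo(n;\alpha,\nu)$: the definition $i_{max}=\lceil 0.9R/(2\ln 2)\rceil$ guarantees that every tile of the innermost layer is contained in $\Dis{R/2}$, and by the triangle inequality any two points of $\Dis{R/2}$ are at hyperbolic distance at most $R$, hence adjacent in $\GPo(n;\alpha,\nu)$.

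First I apply the second part of Lemma~\ref{lem:noinneed} to each tile $T_{i_{max},j}$: since $D_{i_{max},j}=0$ and $i_{max}>0$, the vertices in and below $T_{i_{max},j}$ are covered by a single cycle $C_j$ containing at least one reserved edge $e_j=u_jv_j$ with both endpoints in $T_{i_{max},j}\subseteq\Dis{R/2}$. The sectors of distinct top-layer tiles are disjoint, so the cycles $C_0,\dots,C_{n_{i_{max}}-1}$ are pairwise vertex-disjoint, and their union covers exactly the vertices with radial coordinate at least $R-(i_{max}+1)2\ln 2$. Next I stitch these cycles together using the edge-swap of Lemma~\ref{lem:cyclemerging}: to merge a cycle $C$ carrying a reserved edge $ab\in\Dis{R/2}$ with $C_j$ (carrying $u_jv_j\in\Dis{R/2}$), delete $ab$ and $u_jv_j$ and insert $au_j,bv_j$; all four endpoints lie in $\Dis{R/2}$ and therefore form a clique in $\GPo(n;\alpha,\nu)$, so the two new edges exist and themselves lie in $\Dis{R/2}$, making either of them a valid reserved edge for the next merge. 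After $n_{i_{max}}-1$ swaps a single cycle $C$ covers all vertices of $\bigcup_j C_j$ and still contains at least one edge with both endpoints in $\Dis{R/2}$.

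Finally I absorb the remaining vertices, namely those of $\GPo(n;\alpha,\nu)$ with radial coordinate strictly less than $R-(i_{max}+1)2\ln 2$, all of which lie inside $\Dis{R/2}$ and are therefore adjacent to every other vertex of $\Dis{R/2}$. For each such $w$, pick an edge $xy$ of $C$ with $x,y\in\Dis{R/2}$, delete it, and insert the two edges $xw,wy$; this absorbs $w$ into $C$ and produces two fresh short edges, so the invariant ``$C$ carries an edge inside $\Dis{R/2}$'' is maintained and the process iterates until every vertex has been picked up. The whole argument is essentially bookkeeping; there is no serious obstacle, and the only thing that needs a line of justification is the invariant above, which is preserved automatically because each swap or insertion consumes at most one short edge and manufactures two.
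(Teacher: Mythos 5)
Your proposal is correct and follows essentially the same route as the paper: invoke the second part of Lemma~\ref{lem:noinneed} on each top-layer tile to get one cycle per tile with a reserved edge inside $\Dis{R/2}$, merge these via edge swaps using the fact that $\Dis{R/2}$ is a clique, and then pick up the leftover central vertices. The only (immaterial) difference is that you absorb the remaining inner vertices one at a time, whereas the paper first joins them into a single cycle (they form a clique) and merges that cycle in one step.
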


\begin{proof}
Firstly, we observe that for $i=i_{max}$, if $D_{i,j}=0$, then all vertices in and below $T_{i,j}$ can be covered by 
one 
cycle that contains an edge whose endpoints are both in $T_{i,j}$ by Lemma~\ref{lem:noinneed}. Taking such an edge for 
$T_{i,0}$ and $T_{i,1}$, the four endpoints form a clique by the triangle inequality because all radial coordinates are at most 
$\frac{R}{2}$ and hence the cycle of $T_{i,1}$ can be taken as a detour to the cycle of $T_{i,0}$ as in the proof of 
Lemma~\ref{lem:cyclemerging}. 
As a result, we have a cycle covering all vertices below $T_{i,0}$ and $T_{i,1}$ and with an edge inside the $i$-th layer. 
We can repeat this procedure to merge this resulting cycle also with those in $T_{i,2},\dots, T_{i,n_i-1}$. 
We will end up with one cycle covering all vertices below 
all tiles $T_{i,0}, \dots, T_{i,n_i-1}$, and this cycle contains an edge whose endpoints are both in the inner disk with radius 
$\frac{R}{2}$. 
The remaining vertices in the inner disk, that are not in any tile, form a clique and in particular can be covered by a cycle.
We can again merge this cycle with the one we created earlier via the same trick.
\end{proof}

\subsection{Probabilistic lemmas which ensure the a.a.s.~successful termination of the procedure}

In this subsection we show that the algorithm explained previously works successfully for the poissonised KPKVB model 
$\GPo(n;\alpha,\nu)$ 
with $M \stackrel{\Delta}{=} \Po (n)$ many vertices (the standard depoissonisation of Lemma~\ref{lem:Poisson} gives then the result 
in the standard KPKVB model).
Lemma~\ref{lem:exptail} of this section shows the exponential decay of the demand random variables, which we then use in 
Lemma~\ref{lem:zerodemhighup} to show that the demand 
random variables are simultaneously zero in the maximum layer. Appealing to Lemma~\ref{lem:detconclu}, we can then conclude that this 
makes the algorithm work.

\subsubsection{Sub-exponential tail decay of demand}
We first show the following technical lemma:
\begin{lemma}\label{lem:expineq}
For all real $\epsilon \in (0,1)$, there exists $\kappa = \kappa(\epsilon)>0$ such that for all $i \in \N_{>0}$, for all $x \geq \kappa i^2 \ln(1+ i)$ we have: $(x+1)e^{-\frac{x}{i^2}} \leq \epsilon$.
\end{lemma}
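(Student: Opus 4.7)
The plan is to reduce the inequality to a one-variable monotonicity analysis and then bound the boundary value. Setting $f_i(x) := (x+1) e^{-x/i^2}$ and differentiating, one finds
\[
f_i'(x) = e^{-x/i^2}\left(1 - \frac{x+1}{i^2}\right),
\]
so $f_i$ is decreasing on the interval $[i^2 - 1, \infty)$. Since $\ln(1+i) \geq \ln 2$ for $i \geq 1$, any choice $\kappa \geq 1/\ln 2$ guarantees $\kappa\, i^2 \ln(1+i) \geq i^2 - 1$ for all $i \in \mathbb{N}_{>0}$, so it suffices to control $f_i$ at the left endpoint $x_0 := \kappa\, i^2 \ln(1+i)$.

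At that endpoint one has
\[
f_i(x_0) = \bigl(\kappa\, i^2 \ln(1+i) + 1\bigr) (1+i)^{-\kappa}.
\]
I would then use the crude bound $\ln(1+i) \leq 1+i$ (valid for $i \geq 1$) to get $\kappa\, i^2 \ln(1+i) + 1 \leq 2\kappa (1+i)^3$, so that
\[
f_i(x_0) \leq 2\kappa\,(1+i)^{3-\kappa}.
\]
Provided $\kappa > 3$, the right-hand side is maximised over $i \geq 1$ at $i=1$, giving the uniform bound $f_i(x_0) \leq \kappa \cdot 2^{4-\kappa}$.

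The final step is to pick $\kappa$ large: since $\kappa\, 2^{4-\kappa} \to 0$ as $\kappa \to \infty$, for any prescribed $\epsilon \in (0,1)$ one can choose $\kappa = \kappa(\epsilon)$ large enough (also exceeding $\max(3, 1/\ln 2)$) that $\kappa\, 2^{4-\kappa} \leq \epsilon$, which yields the claim. There is no real obstacle here; the only point requiring minor care is ensuring the monotonicity threshold $i^2 - 1$ lies below $\kappa i^2 \ln(1+i)$ uniformly in $i$, which is why we pick $\kappa$ bounded below away from zero before letting it grow with $\epsilon$.
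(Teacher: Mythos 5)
Your proof is correct and follows essentially the same route as the paper's: bound the function at the left endpoint $x_0=\kappa i^2\ln(1+i)$ via $\ln(1+i)\leq 1+i$ to get a quantity of the form $C\kappa(1+i)^{3-\kappa}$, then use monotonicity in $x$ (checking the derivative of $(x+1)e^{-x/i^2}$ is negative past the threshold) to extend to all larger $x$. The only cosmetic difference is that you identify the monotonicity threshold $i^2-1$ explicitly rather than verifying negativity of the derivative directly on $[x_0,\infty)$, and your final constant is $\kappa 2^{4-\kappa}$ instead of $(\kappa+1)2^{3-\kappa}$; both are immaterial.
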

\begin{proof}
Pick $\kappa> \max\{\frac{1}{\ln 2},3\}$ such that $(\kappa+1)2^{3-\kappa} \leq \epsilon$; this is possible as 
$\lim_{a \rightarrow \infty} (a+1)2^{3-a} = 0$.
We prove the lemma in the following way: in the first step we verify that for $x = \kappa i^2 \ln(1+i)$ we have $(x+1)e^{-\frac{x}{i^2}} \leq \epsilon$, and then we show that the left-hand side of the inequality is monotone decreasing in $x$ (by showing that its derivative with respect to $x$ is negative). Since the right-hand side is independent of $x$, this clearly implies the lemma.

For the first step, we need to show that $(\kappa i^2 \ln(1+i) +1)e^{-\kappa \ln(1+i)} \leq \epsilon$. Using that $i^2 \leq (1+i)^2$, $\ln(1+i) \leq 1+i$ and $1\leq (1+i)^3$, we note that the left-hand side of the inequality can be bounded from above by 
\begin{align*}
(\kappa i^2 \ln(1+i) +1)e^{-\kappa \ln(1+i)}& \leq (\kappa+1)(1+i)^3 (1+i)^{-\kappa}= (\kappa+1)(1+i)^{3-\kappa}
\end{align*}
Now, if we plug in $i=1$, this upper bound is  at most $\epsilon$ by the choice of $\kappa$. The derivative in 
$i$ of the latter expression is
\[ (\kappa+1) (3-\kappa)(1+i)^{2-\kappa}, \]
which is negative for $\kappa>3$ for all $i\geq 1$, Therefore, the upper bound is monotone decreasing in $i$ and hence, for all $i \ge 1$ and  $x = \kappa i^2 \ln(1+i)$, we have  $(x+1)e^{-\frac{x}{i^2}} \leq \epsilon$, concluding the first step.

For the second step, we need to verify that the derivative of $(x+1)e^{-\frac{x}{i^2}}$ is negative in $x \geq \kappa i^2 \ln(1+i) >0$: using the assumptions of  $\kappa >\frac{1}{\ln 2}$ and $i\geq 1$, we have $$e^{-\frac{x}{i^2}}\left(1+(x+1)\left(-\frac{1}{i^2}\right)\right)\leq 1-\kappa\ln(1+i) - \frac{1}{i^2}\leq 1-\kappa \ln 2 <0.$$
The lemma follows.
\end{proof}


We are now ready to state and prove the main lemma of this section.
 \begin{lemma}\label{lem:exptail}
There is a constant $c>0$ such that for $0<\alpha < \frac{1}{2}$ and 
$\nu$ sufficiently large, for all admissible $i,j$, and all $t\geq 0$: 

$$\Pee(D_{i,j} \geq t) \leq e^{-c t}.$$ 

\noindent
\end{lemma}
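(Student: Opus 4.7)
My plan is to induct on the layer index $i$ with a moment-generating function (MGF) bound, and then convert to the tail bound by exponential Markov. Fix $s>0$ and $K>1$ with $s > \log K$ (concretely, $s=1$ and $K=2$ work, giving $c:=s-\log K = 1-\log 2 > 0$) and aim to establish the stronger statement $\Ee[e^{s D_{i,j}}] \leq K$ for every admissible $i,j$.

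The crucial structural point is independence: in the Poissonised model, $A := D_{i-1,2j}$, $B := D_{i-1,2j+1}$, and $N := N(T_{i,j})$ are mutually independent, because they are determined by the Poisson process on three pairwise disjoint regions (the two sub-cones of tiles strictly below $T_{i-1,2j}$ and $T_{i-1,2j+1}$ respectively, and the tile $T_{i,j}$ itself). The base case $i=0$ is a direct Poisson computation: $D_{0,j}\in\{0,1,2\}$ with the explicit probabilities listed just before Lemma~\ref{lem:noinneed}, and
\[
\Ee[e^{s D_{0,j}}] = 1 + (e^s-1)\mu_0 e^{-\mu_0} + \tfrac{1}{2}(e^{2s}-1)\mu_0^2 e^{-\mu_0} \longrightarrow 1
\]
as $\mu_0 = \Theta(\nu) \to \infty$ by Lemma~\ref{lem:exppoints}, so the MGF is at most $K$ for $\nu$ large. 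For the inductive step $i\geq 1$, I use the elementary inequality $e^{s\max(x,0)} \leq 1 + e^{sx}$ (valid for $s>0$), independence, and the Poisson MGF identity $\Ee[e^{-sN}]=\exp(-\mu_i(1-e^{-s}))$ to obtain
\[
\Ee[e^{s D_{i,j}}] \leq 1 + e^{3s}\, \Ee[e^{sA}]\, \Ee[e^{sB}]\, \Ee[e^{-sN}] \leq 1 + e^{3s} K^2 \exp\bigl(-\mu_i(1-e^{-s})\bigr).
\]
Since $1-2\alpha>0$, Lemma~\ref{lem:exppoints} gives $\mu_i \geq \mu_1 = \Theta(\nu)$ uniformly in $i\geq 1$, so taking $\nu$ large enough drives the second summand below $K-1$ and the induction closes.

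Once the uniform MGF bound is in hand, Markov's inequality yields $\Pee(D_{i,j}\geq t) \leq K e^{-st}$ for every $t\geq 0$. For integer $t\geq 1$, $K e^{-st} \leq e^{-ct}$ follows from $\log K \leq (\log K) t$; for $0<t<1$ the integrality of $D_{i,j}$ reduces $\Pee(D_{i,j}\geq t)$ to $\Pee(D_{i,j}\geq 1)\leq K e^{-s} = e^{-c} \leq e^{-ct}$; and $t=0$ is trivial.

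The main technical hurdle is checking that a single choice of $\nu$ works uniformly across all layers. This reduces to verifying the inequality $\mu_i(1-e^{-s}) \geq 3s + 2\log K + \log(K/(K-1))$ for all $i\geq 1$, which is immediate from $\mu_i \geq \mu_1 = \Theta(\nu)$ combined with the growth $\mu_i = \Theta(\nu\cdot 2^{i(1-2\alpha)})$ supplied by Lemma~\ref{lem:exppoints} in the regime $\alpha<1/2$; one simply takes $\nu$ larger than a constant depending on $\alpha$, $s$, and $K$. A mild subtlety is that the inductive hypothesis is stated in MGF form rather than tail form, which avoids the otherwise circular issue of passing a tail bound through the recursion and re-tightening the rate at each step.
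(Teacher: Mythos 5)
Your argument is correct, and it takes a genuinely different route from the paper's. The paper proves the tail bound directly by induction on the layer, using a strictly decreasing sequence of rates $c_0>c_1>\dots>c=10$ to absorb the polynomial loss $(t+1)$ incurred when convolving two tail bounds, together with an auxiliary calculus lemma (Lemma~\ref{lem:expineq}), a case split between $t\geq t_i$ and $t<t_i$, and a Chernoff bound on the lower tail of $N(T_{i,j})$. Your moment-generating-function induction sidesteps all of this: the hypothesis $\Ee\,e^{sD_{i,j}}\leq K$ passes through the recursion $D_{i,j}=\max\{A+B+3-N,0\}$ in one line via $e^{s\max(x,0)}\leq 1+e^{sx}$, independence of $A$, $B$, $N$ (which the paper also relies on, and which holds because these variables are determined by the Poisson process on disjoint regions), and the exact Poisson identity $\Ee\,e^{-sN}=\exp(-\mu_i(1-e^{-s}))$; the uniform lower bound $\mu_i=\Omega(\nu)$ over all admissible $i$ (valid since $1-2\alpha>0$ and the implied constants in Lemma~\ref{lem:exppoints} do not depend on $i$) then closes the induction for $\nu$ large. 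Your conversion to the stated tail bound, including the integrality argument for $0<t<1$ and the observation that $Ke^{-s}=e^{-c}$ exactly when $c=s-\log K$, is also correct; note the bound in fact holds for all real $t\geq 1$, not only integers. What each approach buys: the paper's method yields a large explicit rate ($c=10$) at the price of considerable bookkeeping, whereas yours yields a smaller rate $c=1-\log 2$ with a much shorter proof --- and since the only downstream use (Lemma~\ref{lem:zerodemhighup}) needs merely some $c>0$, your constant suffices; if a specific larger $c$ were ever needed, you could simply take $s$ larger at the cost of requiring a larger $\nu$. One cosmetic point: you should justify $\mu_i\geq\text{const}\cdot\nu$ directly from the lower implied constant in $\mu_i=\Theta(\nu 2^{i(1-2\alpha)})$ rather than via the (not literally established) monotonicity claim $\mu_i\geq\mu_1$; this changes nothing in substance.
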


\begin{proof}
Set $c = 10$, $c_0 = c + \sum_{i=1}^\infty \frac{1}{i^2}<\infty$ and $c_i = c_{i-1}-\frac{1}{i^2}$, for $i>0$. 
So, in particular, we have $\infty > c_0 > c_1 > \dots > c = 10>0$. 

We prove the lemma by induction on $i$. 
For the base case $i=0$, the claim is clear for $t > 2$ because $D_{0,j} \in \{0,1,2\}$, so $\Pee (D_{0,j} \geq t) = 0 < e^{-c_0 t}$. For $t=1,2$,

\begin{align*}
\Pee (D_{0,j} \geq t) \leq \mu_0 e^{-\mu_0} + \frac{\mu_0^2}{2} e^{-\mu_0} = O(\nu^2) e^{-\Theta(\nu)},
\end{align*}

where the equality follows by Lemma~\ref{lem:exppoints}. In particular, by choosing $\nu$ large enough, it holds that $\Pee (D_{0,j} \geq t) \leq e^{-c_0 t}$ for $t=1,2$.


For the inductive step, assume the statement is true for $i-1$ with $1\leq i \leq i_{max}$.
Note that as $D_{i-1,2j}$ and $D_{i-1,2j+1}$ are independent, we can apply the induction hypothesis to $D_{i-1,2j}$ and $D_{i-1,2j+1}$  to get

\begin{align}\label{claim:dim}
\Pee (D_{i-1,2j}+D_{i-1,2j+1} \geq t) &\leq \sum_{s=0}^t \Pee(D_{i-1,2j} \geq s)\Pee(D_{i-1,2j+1} \geq t-s) \nonumber \\
&\leq \sum_{s=0}^t e^{-c_{i-1}s} e^{-c_{i-1}(t-s)} = (t+1)e^{-c_{i-1}t}.
\end{align}  


Define $$\epsilon = \min\left\{e^{-3 c_0 },\frac{1}{2}(1-e^{-c}) \right\},$$
let $\kappa = \kappa(\epsilon)$ as in Lemma~\ref{lem:expineq} and set

$$ t_i := \kappa i^2 \ln(1+i) +3.$$ 

We make a case distinction in $t$.

\noindent
\underline{Case 1:} $t \geq t_i$.

Using the definition of $D_{i,j}$ and by~\eqref{claim:dim}, we have
\begin{align*}
\Pee(D_{i,j} \geq t) & = \sum_{s=0}^\infty \Pee (N(T_{i,j})=s)\Pee (D_{i-1,2j}+D_{i-1,2j+1} \geq t+s-3) \\
&\stackrel{\eqref{claim:dim}}{\leq} \sum_{s=0}^\infty \Pee (N(T_{i,j}) =s)(t+s-3+1)e^{-c_{i-1}(t+s-3)}. 
\end{align*}
Now, we can apply Lemma~\ref{lem:expineq} to $x=t+s-3 \geq \kappa i^2 \ln(1+i)$ to deduce that
$$(x+1)e^{(c_i-c_{i-1})x} = (x+1)e^{-\frac{x}{i^2}} \leq \epsilon \text{, which implies } (x+1)e^{-c_{i-1}x} \leq \epsilon e^{-c_i x}.$$
We infer that
\begin{align*}
\Pee(D_{i,j} \geq t) &\leq \sum_{s=0}^\infty \Pee(N(T_{i,j})=s) \epsilon e^{-c_i (t+s-3)} \\
&= \epsilon e^{3c_i} e^{-c_i t} \sum_{s=0}^\infty \Pee(N(T_{i,j})=s) e^{-c_i s} \\
& \leq e^{-c_i t} \sum_{s=0}^\infty \Pee(N(T_{i,j})=s) = e^{-c_i t},
\end{align*}
where the third line follows by choice of $\epsilon$ and the definition of the sequence $c_0, c_1, \dots$, 
which implies that $c_i < c_0$.
\medskip

\noindent
\underline{Case 2:} $t<t_i$.

Let $\mu_i: = \im (T_{i,0})$.
We first observe that for all $i \in \N_0$:

\begin{align}\label{eq:muione}
\mu_i \geq   (c_i t_i + \ln 2)\frac{2}{1-\ln 2}  
\end{align}
and
\begin{align}\label{eq:muikappa}
\frac{1}{2}\mu_i \geq t_i+3.
\end{align}

To see that this holds, note that as $\mu_i = \Omega(\nu 2^{i(1-2\alpha)} )$ (see Lemma~\ref{lem:exppoints}), we can take 
any $\nu_*>0$ and then pick $i_0 = i_0(\nu_*) \in \N$ such that for all $\nu \geq \nu_*$ and all $i \geq i_0$, the claims hold 
(as the right-hand side is independent of $\nu$ and grows at most polynomially in $i$ whereas $\mu_i$ grows exponentially in $i$). 
Then, as the right-hand sides of~\eqref{eq:muione} and~\eqref{eq:muikappa} are independent of $\nu$, we can pick $\nu_{**}>\nu_*$ 
large enough such that~\eqref{eq:muione} and~\eqref{eq:muikappa} also hold
for $i=0,\dots,i_0(\nu_*)$. 

Thus,~\eqref{eq:muione} and~\eqref{eq:muikappa} hold for all $\nu > \nu_{**}$ and all $i \in \N_0$.

We have
\begin{align*}
&\Pee(D_{i,j} \geq t)\\
 &= \sum_{j=0}^\infty \Pee(D_{i-1,2j}+D_{i-1,2j+1} = j) \Pee( D_{i-1,2j}+D_{i-1,2j+1}+3-N(T_{i,j}) \geq t | D_{i-1,2j}+D_{i-1,2j+1} = j) \\
&= \sum_{j=0}^\infty \Pee (D_{i-1,2j}+D_{i-1,2j+1} = j) \Pee( N(T_{i,j}) \leq j+3-t) \\
&= \sum_{j=\max\{t-3,0\}}^\infty \Pee (D_{i-1,2j}+D_{i-1,2j+1} = j)\Pee(N(T_{i,j}) \leq  j+3-t) \\ 
&\leq \sum_{j=\max\{t-3,0\}}^\infty \Pee (D_{i-1,2j}+D_{i-1,2j+1}=j)\Pee(N(T_{i,j}) \leq  j+3). 
\end{align*}

We split the sum into two parts: for  $j+3 \leq \frac{1}{2} \mu_i$, we apply Lemma~\ref{lem:Chernoff} with $k=j+3$ and $\mu= \mu_i$ and hence, $\frac{k}{\mu} = \frac{j+3}{\mu_i} \leq \frac{1}{2}$.
Therefore, $H(\frac{k}{\mu}) \geq \frac{1}{2}(1-\ln 2)>0$ and we get 
$$\pr(N(T_{i,j}) \leq j+3) \leq e^{-\mu_i\frac{1}{2}(1-\ln 2)}.$$

By~\autoref{eq:muione}, it follows that
$$e^{-\mu_i \frac{1}{2}(1-\ln 2)} \leq \frac{1}{2}e^{-c_i t_i} \leq  \frac{1}{2}e^{-c_i t}.$$
So, we have for the first part of the sum
\begin{align*}
\sum_{\substack{j=\max(t-3,0)\\j+3 \leq \frac{1}{2} \mu_i}}^\infty \Pee(D_{i-1,2j}+D_{i-1,2j+1} =j)\Pee(N(T_i) \leq j+3) \leq e^{-\mu_i\frac{1}{2}(1-\ln 2)}  \leq \frac{1}{2}e^{-c_i t}.
\end{align*}

For the second part, we have $j +3 > \frac{1}{2}\mu_i$. By~\autoref{eq:muikappa}, $\frac{1}{2}\mu_i \geq t_i+3>t_i$. 
By~\eqref{claim:dim} and Lemma~\ref{lem:expineq} with $x = j \geq \kappa i^2 \ln (1+i)$ it holds that
\begin{align*}
\Pee(D_{i-1,2j}+D_{i-1,2j+1} = j) &\leq \Pee(D_{i-1,2j}+D_{i-1,2j+1} \geq j) \leq (j+1)
e^{-c_{i-1}j}\leq \epsilon e^{-c_i j}.
\end{align*} 
With this, we can also bound from above the second sum:
\begin{align*}
\sum_{\substack{j=\max\{t-3,0\} \\ j+3 > \frac{1}{2}\mu_i}}^\infty \Pee(D_{i-1,2j}+D_{i-1,2j+1} =j)\Pee(N(T_{i,j})\leq j+3) &\leq \sum_{j=t}^\infty \epsilon e^{-c_i j} = e^{-c_i t} \epsilon \sum_{j=0}^\infty (e^{-c_i})^j \\
& = e^{-c_i t} \epsilon \frac{1}{1-e^{-c_i}} \\
& \leq e^{-c_i t} \epsilon \frac{1}{1-e^{-c}}\leq \frac{1}{2}e^{-c_i t}.
\end{align*}
where the last inequality follows from the choice of $\epsilon$ and the fact that $c_i > c$.

By combining both sums, we conclude that also for $t$ as in Case 2, 
$\Pee (D_{i,j} \geq t) \leq \frac{1}{2}e^{-c_i t}+\frac{1}{2}e^{-c_i t} = e^{-c_i t}$, and the lemma follows.
\end{proof}

\subsubsection{Deriving Theorem~\ref{thm:HC}}

Finally, Theorem~\ref{thm:HC} is a result of the following lemma together with Lemma~\ref{lem:detconclu}.
\begin{lemma}\label{lem:zerodemhighup}
Let $0<\alpha < \frac{1}{2}$, $\nu$ sufficiently large. Then 
$$\Pee ( D_{i_{\max},j}=0, \text{ for all }j=0,\dots, n_{i_{\max}}-1) = 1-o\left(n^{-1/2}\right).$$
\end{lemma}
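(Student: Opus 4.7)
The plan is to show that $\Pee(D_{i_{\max},j} \geq 1)$ decays faster than any polynomial in $n$; combined with a union bound over the only polynomially many indices $j$ in the top layer, this gives the claim. To set the scales: by definition $n_{i_{\max}} = 2^{4-i_{\max}+\lfloor R/(2\ln 2)\rfloor} = \Theta((n/\nu)^{0.1})$, and by Lemma~\ref{lem:exppoints} the mean $\mu := \E N(T_{i_{\max},j}) = \Theta(\nu \cdot 2^{i_{\max}(1-2\alpha)}) = \Theta(\nu^{1-\gamma}\, n^{\gamma})$ with $\gamma := 0.9(1-2\alpha)$. The hypothesis $\alpha < 1/2$ is used exactly here, to guarantee $\gamma > 0$, so that for $\nu$ fixed the mean $\mu$ is polynomially large in $n$. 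This polynomial size is the engine of the argument.

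A direct application of Lemma~\ref{lem:exptail} at $t=1$ only gives $\Pee(D_{i_{\max},j} \geq 1) \leq e^{-c}$, a constant, which is useless after a union bound over $\Theta(n^{0.1})$ tiles. So I would instead unwind the recursion one step: the event $\{D_{i_{\max},j} \geq 1\}$ forces
\[ N(T_{i_{\max},j}) \leq D_{i_{\max}-1,2j} + D_{i_{\max}-1,2j+1} + 2, \]
and splitting this at the threshold $\mu/4$ yields
\[ \Pee(D_{i_{\max},j} \geq 1) \leq \Pee\bigl(N(T_{i_{\max},j}) \leq \mu/4\bigr) + \Pee\bigl(D_{i_{\max}-1,2j}+D_{i_{\max}-1,2j+1} \geq \mu/4 - 2\bigr). \]
The first summand is $e^{-\Omega(\mu)}$ by Chernoff (Lemma~\ref{lem:Chernoff} with $k = \mu/4$, using $H(1/4) \geq (1-\ln 2)/2 > 0$). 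For the second summand, Lemma~\ref{lem:exptail} combined with the convolution argument in~\eqref{claim:dim} gives $\Pee(D_{i_{\max}-1,2j}+D_{i_{\max}-1,2j+1} \geq t) \leq (t+1)\, e^{-ct}$, which at $t = \mu/4 - 2$ is also $e^{-\Omega(\mu)}$ once $\nu$ (and hence $\mu$) is large enough that $\mu/4 - 2 > 0$. Combining the two pieces, $\Pee(D_{i_{\max},j} \geq 1) \leq e^{-\Omega(n^{\gamma})}$.

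A union bound over the $n_{i_{\max}}$ choices of $j$ then gives
\[ \Pee\bigl(\exists j : D_{i_{\max},j} \geq 1\bigr) \leq n_{i_{\max}} \cdot e^{-\Omega(n^{\gamma})} = o(n^{-1/2}), \]
since $n_{i_{\max}}$ is polynomial in $n$ while $\gamma > 0$. The only conceptual obstacle is recognising that Lemma~\ref{lem:exptail} cannot be used as a black box at $t=1$: its decay constant $c$ is absolute and so yields only a constant bound per tile, losing against the polynomial factor $n_{i_{\max}}$. The point is to exploit the recursion once more at the top layer and bring in a fresh Chernoff bound on $N(T_{i_{\max},j})$, whose mean is polynomially large in $n$ precisely because $\gamma > 0$. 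After that observation the bookkeeping is routine; the hard work has already been done in Lemma~\ref{lem:exptail}.
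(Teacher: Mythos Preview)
Your proposal is correct and follows essentially the same approach as the paper: unwind the recursion one step at the top layer, control $N(T_{i_{\max},j})$ by Chernoff and the previous-layer demands by Lemma~\ref{lem:exptail}, then take a union bound over the $\Theta(n^{0.1})$ tiles. The only cosmetic differences are the choice of threshold ($\mu/4$ versus the paper's $\mu_i/100$) and that you bound the sum $D_{i-1,2j}+D_{i-1,2j+1}$ via the convolution estimate while the paper bounds each summand separately.
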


\begin{proof}
First, let us recall that the number of tiles in the $i$th layer is $n_{i} = 2^{4-i+\lfloor \frac{R}{2\ln 2} \rfloor}$. 
For $i = i_{max} = \lceil \frac{0.9 R}{2\ln 2} \rceil$, it follows that $n_{i} = \Theta(2^{\frac{0.1 R}{2\ln 2}})   = \Theta( n^{0.1}) $. 
Furthermore, $\mu_i = \Omega(2^{i_{max} (1-2\alpha)}) = \Omega( n^{0.9 (1-2\alpha)})$. 

We have

\begin{align*}
\Pee (\text{for all }j=0,\dots, n_{i}-1: D_{i,j} = 0) = 1- \Pee(D_{i,j} > 0 \text{ for some }j).
\end{align*}

By the union bound over all tiles in layer $i= i_{\max}$

\begin{align*}
\Pee(D_{i,j}>0 \text{ for some } j) &\leq \sum_{j=0}^{n_{i}-1} \Pee (D_{i,j} >0). 
\end{align*}

Now we observe that if $D_{i-1,2j} \leq \frac{\mu_i}{100}$ and $D_{i-1,2j+1} \leq \frac{\mu_i}{100}$ 
and $N(T_{i,j})\geq \frac{3}{100}\mu_i$ all hold, then $3+D_{i-1,2j}+D_{i-1,2j+1}-N(T_{i,j}) \leq 3 + \frac{2}{100}\mu_i - \frac{3}{100}\mu_i \leq 0$ 
since $\mu_i = \Omega(\nu 2^{i (1-2\alpha)})$. Hence if all three of these conditions hold then $D_{i,j} \leq 0$. 
In other words, if $D_{i,j}>0$ then $D_{i-1,2j} > \frac{\mu_i}{100}$ or $D_{i-1,2j+1}>\frac{\mu_i}{100}$ or $N(T_{i,j})<\frac{3}{100}\mu_i$. 

Therefore,

\begin{align*}
\Pee(D_{i,j}>0) &\leq \Pee\left(D_{i-1,2j} > \frac{\mu_i}{100}\text{ or } D_{i-1,2j+1} > \frac{\mu_i}{100}\text{ or } N(T_{i,j})<\frac{3\mu_i}{100}\right) \\
&\leq \Pee\left(D_{i-1,2j} > \frac{\mu_i}{100}\right)+\Pee\left(D_{i-1,2j+1} > \frac{\mu_i}{100}\right) +\Pee\left(N(T_{i,j})<\frac{3\mu_i}{100}\right).
\end{align*}

For the first two terms, we use Lemma~\ref{lem:exptail}, taking $\nu$ sufficiently large, and for the third term, we apply Lemma~\ref{lem:Chernoff}. 
We get

\begin{align*}
\Pee(D_{i,j} >0) \leq 2 e^{-c\frac{\mu_i}{100}} + e^{-\Omega(\mu_i)} = e^{-\Omega(\mu_i)}.
\end{align*}

Using that $\mu_i = \Omega( n^{0.9(1-2\alpha)})$, it follows that $\Pee (D_{i,j}>0) =  O(e^{-\Omega(n^{0.9(1-2\alpha)})})$. 
Since $n_{i_{max}} = \Theta(n^{0.1})$, we obtain

\begin{align*}
\Pee (D_{i,j}>0 \text{ for some } j) \leq \sum_{j} \Pee(D_{i,j} >0) = O\left(n^{0.1} e^{-\Omega(n^{0.9(1-2\alpha)})}\right) = o\left( n^{-1/2}\right),
\end{align*}

and the lemma follows.
\end{proof}

\bibliographystyle{plain}
\bibliography{Ham}

\end{document}